\documentclass[a4paper,12 pt]{article}
\usepackage[utf8]{inputenc}
\usepackage{amsmath}
\usepackage{amssymb}
\usepackage{latexsym}
\usepackage{amsthm}
\usepackage{amsfonts}
\usepackage{graphicx}
\usepackage{color}
\usepackage{shuffle}
\usepackage{graphicx}
\usepackage{xcolor}

\newtheorem{theorem}{Theorem}

\newtheorem{proposition}{Proposition}
\newtheorem{remark}{Remark}
\newtheorem{corollary}{Corollary}

\newtheorem{example}{Example}
\allowdisplaybreaks

\def \a{\alpha}
\def \z{\zeta}

\def\r{\rightarrow}

\def\c{\mathbb{C}^r}

\usepackage[a4paper,top=3cm,bottom=2cm,left=1.5cm,right=1.5cm,marginparwidth=1cm]{geometry}
\title{Meromorphic continuation of a $q$-analogue of multiple zeta function }
\author{$\text{Nita Tamang}^a, \text{Pitu Sarkar}^b$ \\ $\text{Department of Mathematics, University of North Bengal}^{a,b}$\\
	West Bengal, India 734013\\ $\text{nita\_math@nbu.ac.in}^a$,~$\text{pitucob2016@gmail.com}^b$  }
\date{}

\begin{document}
	\maketitle
	
	\begin{abstract}
		In this paper, we obtain the meromorphic continuation of a $q$-analogue of the multiple zeta function using an elementary formula called the translation formula.  We then obtain the matrix representation of the translation formula using which, we locate the poles of the function and find the corresponding residues. While locating the poles, we also obtain the inverse of an infinite triangular matrix in a particular case.  
	\end{abstract}
	\hspace{1cm}Keywords: $q$-multiple zeta function; Meromorphic continuation; Pole; Residue.\\ 
	\hspace*{1cm}Mathematics Subject Classification 2020: 11M32.
	\section{Introduction}
	Let $0 < q<1$. For any positive integer $k$, define its $q$-analog as $[k]=[k]_q = \dfrac{1-
		q^k}{1-q}$. For $r \geq 1$, consider the subset $U_r$ of $\mathbb{C}^r$ consisting of all $r$-tuples $(s_1, s_2, \dots ,s_r)$ of complex numbers satisfying the conditions
	$$\text{Re}(s_1+ \dots +s_j)>0 \hspace{1cm}\text{for}~ j=1, 2, \dots , r.$$ Then $U_r$ is a convex open subset of $\mathbb{C}^r$. The $q$-multiple zeta function of depth $r$ on $U_r$ is defined by
	\begin{align}
		\label{1}
		&\z_q(s_1, \dots, s_r)=\sum_{k_1>\dots >k_r\geq 1}\dfrac{q^{k_1s_1+ \dots +k_rs_r}}{[k_1]^{s_1} \dots [k_r]^{s_r }}.
	\end{align}  
This function is a $q$-analogue of the (Euler–Zagier) multiple zeta function, defined by
\begin{align}
	\label{eq2}
	\zeta ( s_1,\dots,s_r)=\sum_{k_1>\dots >k_r\geq 1}\dfrac{1}{k_1^{s_1}\dots k_r^{s_r}},
\end{align}
where $s_1, \dots,s_r$ are complex variables satisfying $ \text{Re}(s_1 + \dots +s_j)>j $ for all $j =
1, \dots ,r$. If $(s_1 , \dots ,s_r) \in \c $, with $ \text{Re}(s_1 + \dots +s_j)>j $, for all $j =
1, \dots ,r$, then the series \eqref{1} converges to $\zeta( s_1,\dots, s_r)$ as $q \r 1.$ 
In \cite{5},  Zudilin studied the function \eqref{1} at positive integer arguments as an
appropriate $q$-extension of multiple zeta values. It essentially coincides with the function defined by Schlesinger
in \cite{6} as well. Hence it is called the SZ model of the $q$-multiple zeta function and it is denoted by  $\z_q^{\text{SZ}}(s_1, \dots, s_r)$. Throughout this article, by $q$-multiple zeta function, we mean $\z_q^{\text{SZ}}(s_1, \dots, s_r)$ and use $\z_q(s_1, \dots, s_r)$ to denote it.  Many authors have studied and shown through different approaches that the multiple zeta function of depth $r$ can be meromorphically continued to the whole complex plane $\c$ \cite{3,4, 8}. One can ask a natural question here whether such a continuation of the function $(1)$ happens to exist or not. In \cite{1}, J. Zhao studied a more general function of $2r$ complex variables $s_1,...,s_r,t_1,...,t_r \in \mathbb{C}$, which is defined by
\begin{align*}
	&f_q(s_1,...,s_r;t_1,...,t_r)=\sum_{k_1>\dots >k_r\geq 1}\dfrac{q^{k_1t_1+ \dots +k_rt_r}}{[k_1]^{s_1} \dots [k_r]^{s_r}},
\end{align*}
and converges if $\text{Re}(t_1 + \dots + t_j)>0$, for all $j = 1, \dots ,r$. He extended $f_q$ to a meromorphic function on $\mathbb{C}^{2r}$ with explicit poles using the binomial theorem. The case $r=1$ was first studied by Kaneko et al. in \cite{2}.\\

In \cite{7}, Mehta, Saha and Viswanadham obtain the meromorphic continuation of the multiple zeta function of depth $r$ using a translation formula which generalizes the formula for the Riemann zeta function given by Ramanujan\cite{5.14}.
Inspired by their work, we obtain a similar kind of translation formula and use it to obtain the meromorphic continuation of the $q$-multiple zeta function in Section $2$.  Then we express our translation formula in terms of certain infinite matrices in Section 3.  Using the matrix representation of the translation formula, we locate the poles of the $q$-multiple zeta function and express their residues as entries of these matrices. Finding these residues is a new addition to the literature. Finally, in Section 5, we give some observations on the meromorphic continuation of another model of the $q$-multiple zeta function.

\section{Meromorphic continuation of $q$-multiple zeta function}
In this section, we prove the normal convergence of the $q$-multiple zeta function on any compact subset of $U_r$ and obtain the translation formula. We then establish its meromorphic continuation. For that, we first prove the following result:
\begin{proposition}
	\label{p1}
	Let $r$ be a positive integer and $(\alpha_1, \dots , \alpha_r)$ be an $r$-tuple of real numbers in $U_r$. Then the family of functions 
	\begin{align}
		\label{5.5}
		{\Bigg(\dfrac{q^{k_1s_1+ \dots +k_rs_r}}{[k_1]^{s_1} \dots [k_r]^{s_r }}\Bigg)}_{k_1>\dots >k_r\geq 1}	
	\end{align}
	is normally convergent on the set $\mathcal{D}(\a_1, \dots , \a_r)=\{(s_1, \dots, s_r) \in \c: \text{Re}(s_j)\geq \a_j, 1 \leq j \leq r\}.$ 
\end{proposition}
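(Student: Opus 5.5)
The plan is to dominate every term of the family \eqref{5.5} on $\mathcal{D}(\a_1,\dots,\a_r)$ by a term that does not depend on $(s_1,\dots,s_r)$, and then show that these dominating terms have a finite sum. That is exactly normal convergence.

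\textbf{Step 1 (pointwise bound).} Fix $(s_1,\dots,s_r)\in\mathcal{D}(\a_1,\dots,\a_r)$ and write $\sigma_j=\text{Re}(s_j)\ge \a_j$. Since $[k_j]>0$ is real and $0<q<1$, we have
$$\Bigg|\dfrac{q^{k_1s_1+\dots+k_rs_r}}{[k_1]^{s_1}\cdots[k_r]^{s_r}}\Bigg| = \prod_{j=1}^r q^{k_j\sigma_j}[k_j]^{-\sigma_j}.$$
Because $q<1$ and $k_j\ge1$, we get $q^{k_j\sigma_j}\le q^{k_j\a_j}$. Because $1\le [k_j]<\frac{1}{1-q}$, we get $[k_j]^{-\sigma_j}\le[k_j]^{-\a_j}\le C_j$, where $C_j=\max\big(1,(1-q)^{\a_j}\big)$. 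This handles both signs of $\a_j$; the individual $\a_j$ need not be positive, and only the partial sums are. Hence each term is bounded by $C\,q^{k_1\a_1+\dots+k_r\a_r}$ with $C=\prod_j C_j$, uniformly on $\mathcal{D}(\a_1,\dots,\a_r)$.

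\textbf{Step 2 (summability).} It remains to show that $\sum_{k_1>\dots>k_r\ge1} q^{k_1\a_1+\dots+k_r\a_r}<\infty$. I would use summation by parts. Set $k_{r+1}=0$ and $n_j=k_j-k_{j+1}\ge1$, and let $A_j=\a_1+\dots+\a_j$. Then
$$\sum_{j=1}^r k_j\a_j=\sum_{j=1}^r n_jA_j.$$
The map $(k_1,\dots,k_r)\mapsto(n_1,\dots,n_r)$ is a bijection from $\{k_1>\dots>k_r\ge1\}$ onto $\mathbb{Z}_{\ge1}^r$. The sum therefore factors as
$$\prod_{j=1}^r\sum_{n\ge1}q^{nA_j}=\prod_{j=1}^r\frac{q^{A_j}}{1-q^{A_j}}.$$
This product is finite precisely because $(\a_1,\dots,\a_r)\in U_r$, i.e.\ $A_j>0$ for all $j$.

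\textbf{Main obstacle.} The only delicate point is that the individual $\a_j$ (and $\text{Re}(s_j)$) may be negative. One therefore cannot bound each factor $q^{k_j\a_j}$ separately by a convergent series. The boundedness of $[k]$ between $1$ and $1/(1-q)$ takes care of the denominators. The Abel-type rewriting in Step 2 is what converts the hypothesis on the partial sums $A_j$ into convergence.
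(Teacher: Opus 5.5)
Your proof is correct. Its overall shape matches the paper's: dominate each term uniformly on $\mathcal{D}(\alpha_1,\dots,\alpha_r)$, then sum the majorants. The two steps, however, are carried out differently.

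\textbf{How the paper does it.} The paper keeps numerator and denominator together. Since $0<q^{k}/[k]\le 1$, the map $\sigma\mapsto (q^{k}/[k])^{\sigma}$ is nonincreasing. So the sup norm of each term on $\mathcal{D}(\alpha_1,\dots,\alpha_r)$ is exactly its value at the real point $(\alpha_1,\dots,\alpha_r)$. The summability of these values is then simply quoted from Zhao's Proposition 2.2.

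\textbf{How you do it.} You split the factors. You absorb $[k_j]^{-\alpha_j}$ into the constant $C=\prod_j\max\bigl(1,(1-q)^{\alpha_j}\bigr)$ using $1\le [k]<1/(1-q)$. You then prove summability of $\sum q^{k_1\alpha_1+\dots+k_r\alpha_r}$ directly: the substitution $n_j=k_j-k_{j+1}$ factors it into geometric series with ratios $q^{A_j}$.

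\textbf{Comparison.} You lose the exact value of the sup norm, but only by a constant factor, which is irrelevant here. In exchange you get a self-contained argument with the explicit bound $C\prod_{j}q^{A_j}/(1-q^{A_j})$. It also makes transparent why exactly the partial-sum conditions defining $U_r$ are what is needed. In effect you prove the special case of Zhao's result that the paper relies on. Your observation that individual $\alpha_j$ may be negative is precisely the point the paper's citation covers without comment.
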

\begin{proof}
	Since $0<q<1$, we have $\left|\dfrac{q^{k_i}}{[k_i]}\right|=\left|\dfrac{q^{k_i}}{1+q+\dots+q^{k_i-1}}\right| \leq 1$, for all $i=1, 2, \dots r$. 
	Also, for $(s_1, \dots, s_r)\in \mathcal{D}(\a_1, \dots , \a_r)$, we have $\text{Re}(s_j)\geq \a_j$, where $1 \leq j \leq r$. 
	Therefore for $(s_1, \dots, s_r)\in \mathcal{D}(\a_1, \dots , \a_r)$,  $\left|\bigg(\dfrac{q^{k_i}}{[k_i]} \bigg)\right|^{s_i} \leq \left|\bigg(\dfrac{q^{k_i}}{[k_i]} \bigg)\right|^{\a_i} $, for all $i=1, \dots, r$. 
	Thus for each sequence of integers $k_1>\dots >k_r\geq 1$, we have
	\begin{align*}
		\bigg\lVert \dfrac{q^{k_1s_1+ \dots +k_rs_r}}{[k_1]^{s_1} \dots [k_r]^{s_r }}\bigg\rVert_{\mathcal{D}_{(\a_1, \dots, \a_r)}}=\dfrac{q^{k_1\a_1+ \dots +k_r\a_r}}{[k_1]^{\a_1} \dots [k_r]^{\a_r }}.
	\end{align*} 
	Now the family 
	\begin{align*}
		{\Bigg(\dfrac{q^{k_1\a_1+ \dots +k_r\a_r}}{[k_1]^{\alpha_1} \dots [k_r]^{\alpha_r }}\Bigg)}_{k_1>\dots >k_r\geq 1}	
	\end{align*}
	is convergent by \cite[Proposition 2.2]{1}, since $(\alpha_1, \dots , \alpha_r) \in U_r$. This implies that the family \eqref{5.5} is normally convergent on  $ \mathcal{D}(\a_1, \dots , \a_r)$.
\end{proof}
\begin{corollary}
	\label{c1}
	For a positive integer $r$, the series on the R.H.S. of \eqref{1} converges normally on any compact subset of $U_r$.
\end{corollary}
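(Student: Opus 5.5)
The plan is to reduce to Proposition \ref{p1} by a compactness argument. Proposition \ref{p1} does not apply directly to a compact set $K\subset U_r$. The natural choice $\a_j=\min_{K}\text{Re}(s_j)$ need not give an $r$-tuple in $U_r$: a sum of minima can be strictly smaller than the minimum of the sums, so a condition like $\a_1+\a_2>0$ may fail. We therefore cover $K$ locally instead of using a single set $\mathcal{D}(\a_1,\dots,\a_r)$.

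\textbf{Step 1 (local cover).} Fix a point $s=(s_1,\dots,s_r)\in K$. Since $U_r$ is defined by finitely many strict inequalities $\text{Re}(s_1+\dots+s_j)>0$, there is an $\varepsilon>0$ such that the real tuple $\a^{(s)}=(\text{Re}(s_1)-\varepsilon,\dots,\text{Re}(s_r)-\varepsilon)$ still lies in $U_r$. It suffices to take $j\varepsilon<\text{Re}(s_1+\dots+s_j)$ for every $j$. Then the open set
\[
V_s=\{(z_1,\dots,z_r)\in\c:\ \text{Re}(z_j)>\a^{(s)}_j,\ 1\le j\le r\}
\]
contains $s$ and is contained in $\mathcal{D}(\a^{(s)})$.

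\textbf{Step 2 (finite subcover).} The sets $V_s$, $s\in K$, form an open cover of $K$. By compactness, finitely many of them cover $K$, say $V_{s^{(1)}},\dots,V_{s^{(m)}}$. Then $K\subset\bigcup_{i=1}^m \mathcal{D}(\a^{(s^{(i)})})$.

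\textbf{Step 3 (summing the bounds).} For each term $f_{k}$ of the family \eqref{5.5}, the sup norm on $K$ is bounded by the maximum of its sup norms on the sets $\mathcal{D}(\a^{(s^{(i)})})$, and hence by their sum:
\[
\lVert f_k\rVert_K\le\sum_{i=1}^m\lVert f_k\rVert_{\mathcal{D}(\a^{(s^{(i)})})}.
\]
By Proposition \ref{p1}, each of the $m$ series $\sum_k\lVert f_k\rVert_{\mathcal{D}(\a^{(s^{(i)})})}$ converges. A finite sum of convergent series converges, so $\sum_k\lVert f_k\rVert_K<\infty$, which is normal convergence on $K$.

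The only delicate point is Step 1, namely choosing $\a$ inside $U_r$ rather than merely having $\text{Re}(s_j)\ge\a_j$ on $K$. This is handled by the openness of $U_r$ together with the local cover, which is exactly why a single global choice of $\a$ is avoided.
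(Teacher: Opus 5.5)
Your proof is correct and takes the same route as the paper. The paper likewise covers each point of $U_r$ by a neighborhood $\mathcal{D}(\a_1,\dots,\a_r)$ with $(\a_1,\dots,\a_r)\in U_r$ and applies Proposition~\ref{p1}; you additionally make explicit the choice $j\varepsilon<\text{Re}(s_1+\dots+s_j)$ and the finite-subcover step, which the paper compresses into ``Consequently.''
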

\begin{proof}
	By the definition of $U_r$, any point $(s_1, \dots, s_r) \in U_r,$  has a neighborhood of the form 
	\begin{align*}
		\mathcal{D}(\a_1, \dots , \a_r)=\{(s_1, \dots, s_r) \in \c: \text{Re}(s_j)\geq \a_j, 1 \leq j \leq r\},\end{align*}
	where $(\alpha_1, \dots , \alpha_r)$ is an $r$-tuple of real numbers in $U_r$. By Proposition \ref{p1}, the series converges normally on this neighborhood. Consequently, the series is normally convergent on every compact subset of $U_r$. 
\end{proof}
Now consider $$A= {\Bigg(\frac{[n]}{q^{n-1}}\Bigg)}^{-s} - {\Bigg(\frac{[n]}{q^n}+1\Bigg)}^{-s},$$	
where $n$ is a positive integer and $s$ is a complex number. 
Then by the Taylor series expansion, 
\begin{align*}
	&A={\bigg(\frac{[n]}{q^{n-1}}\bigg)}^{-s}-{\bigg(\frac{[n]}{q^n}\bigg)}^{-s}\sum_{k \geq 0} {(-1)}^k \frac{s(s +1) \cdots (s + k -1)}{k!}{\bigg(\frac{q^n}{[n]}\bigg)}^{k}
	\nonumber \\
	&={\bigg(\frac{[n]}{q^{n-1}}\bigg)}^{-s}-\sum_{k \geq 0} {(-1)}^k \frac{s(s +1) \cdots (s + k -1)}{k!}{\bigg(\frac{q^n}{[n]}\bigg)}^{s+k}\nonumber \\
	&={\bigg(\frac{q^{n}}{[n]}\bigg)}^{s}(q^{-s}-1)+\sum_{k \geq 0} {(-1)}^{k} \frac{s(s +1) \cdots (s + k )}{(k+1)!}{\bigg(\frac{q^n}{[n]}\bigg)}^{s+k+1}.
\end{align*}
Thus \begin{align}
	\label{eq4}
	{\Bigg(\frac{[n]}{q^{n-1}}\Bigg)}^{-s} - {\Bigg(\frac{[n]}{q^n}+1\Bigg)}^{-s}={\bigg(\frac{q^{n}}{[n]}\bigg)}^{s}(q^{-s}-1)+\sum_{k \geq 0} {(-1)}^{k} \frac{s(s +1) \cdots (s + k )}{(k+1)!}{\bigg(\frac{q^n}{[n]}\bigg)}^{s+k+1}.
\end{align}

\begin{proposition}
	\label{pro1}
	Let $r \geq 2$ be an integer. The family of functions on $U_r$,
	\begin{align*}
		& {\bigg( {(-1)}^{k} \frac{s_1(s_1 +1) \cdots (s_1 + k )}{(k+1)!}\dfrac{q^{{n_1}{(s_1+k+1)}}q^{n_2s_2+n_3s_3+ \dots +n_rs_r}}{{[n_1]}^{s_1+k+1}[n_2]^{s_2} [n_3]^{s_3}\dots [n_r]^{s_r}}\bigg)}_{n_1 >n_2 > n_3 > \dots >n_r \geq 1,~ k \geq 0} 
	\end{align*}
	are normally summable on any compact subset of $U_r$.
\end{proposition}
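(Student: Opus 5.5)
Fix a compact subset $K$ of $U_r$. As in Corollary \ref{c1}, it suffices to bound the terms on a set of the form $\mathcal{D}(\a_1,\dots,\a_r)$ with $(\a_1,\dots,\a_r)$ real and in $U_r$, since $K$ is covered by finitely many such neighbourhoods. On $K$ we also have $|s_1|\le M$ for some $M>0$. We write $x_n = q^{n}/[n]\in(0,1]$, using $x_n\le 1$ as in the proof of Proposition \ref{p1}. The sup norm of the $(n_1,\dots,n_r,k)$-term is then at most
\begin{align*}
\frac{M(M+1)\cdots(M+k)}{(k+1)!}\; x_{n_1}^{\a_1+k+1}\, x_{n_2}^{\a_2}\cdots x_{n_r}^{\a_r},
\end{align*}
since $|s_1(s_1+1)\cdots(s_1+k)|\le M(M+1)\cdots(M+k)$ and $|x^{s}|=x^{\mathrm{Re}\, s}$.

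The plan is to sum over $k$ first, for fixed $n_1$. The $k$-sum is
\begin{align*}
x_{n_1}^{\a_1+1}\sum_{k\ge0}\frac{M(M+1)\cdots(M+k)}{(k+1)!}x_{n_1}^k .
\end{align*}
This is a binomial-type series $\bigl((1-x)^{-M}-1\bigr)/x$, and it converges only when $x<1$. Here lies the main obstacle: $x_1=q/[1]=q<1$, and in fact $x_n\le q^n<1$ for all $n$, since $[n]\ge1$. Moreover $x_n\to0$ geometrically, with $x_n\le q^n$. Hence for every $n_1\ge1$ the $k$-sum is bounded by
\begin{align*}
x_{n_1}^{\a_1+1}\cdot\frac{(1-q)^{-M}-1}{q}\le C\, x_{n_1}^{\a_1+1},
\end{align*}
where $C=C(M,q)$ is independent of $n_1$. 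I would verify this by monotonicity in $x$ on $[0,q]$.

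It remains to show that
\begin{align*}
\sum_{n_1>\dots>n_r\ge1} x_{n_1}^{\a_1+1}x_{n_2}^{\a_2}\cdots x_{n_r}^{\a_r}<\infty .
\end{align*}
This is the family of Proposition \ref{p1} evaluated at the real point $(\a_1+1,\a_2,\dots,\a_r)$. That point lies in $U_r$, because each partial sum $\a_1+\dots+\a_j$ increases by $1$. So its convergence follows from \cite[Proposition 2.2]{1}, as in Proposition \ref{p1}. Alternatively, since $x_{n_1}\le 1$, one can bound $x_{n_1}^{\a_1+1}\le x_{n_1}^{\a_1}$ and cite Proposition \ref{p1} directly.

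Combining these steps, the sup norms over $K$ are summable over all $(n_1,\dots,n_r,k)$, which gives normal summability. The only delicate point is the uniform bound on the $k$-series. It relies on $x_n\le q<1$ uniformly in $n$ and on $M$ being uniform over the compact set.
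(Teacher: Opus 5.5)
Your proposal is correct and follows essentially the same route as the paper. Both arguments bound $|s_1(s_1+1)\cdots(s_1+k)|$ by $b(b+1)\cdots(b+k)$, control the extra factor $(q^{n_1}/[n_1])^{k+1}$ geometrically using $q^{n_1}/[n_1]\le q<1$, and invoke the normal convergence of the depth-$r$ series (Proposition~\ref{p1}/Corollary~\ref{c1}). The only difference is cosmetic: you sum over $k$ first in the closed form $((1-x)^{-M}-1)/x$, whereas the paper writes the bound as a product of a convergent $k$-series and the summable $n$-family.
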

\begin{proof}
	Let $S$ be a compact subset of $U_r$ and $b$ denote the supremum of $|s_1|$ in $S$. Consider
	\begin{align*}
		& f_1={(-1)}^{k}  \frac{s_1(s_1 +1) \cdots (s_1 + k )}{(k+1)!}\dfrac{q^{{n_1}{(s_1+k+1)}}q^{n_2s_2+n_3s_3+ \dots +n_rs_r}}{{[n_1]}^{s_1+k+1}[n_2]^{s_2} [n_3]^{s_3}\dots [n_r]^{s_r}}.
	\end{align*}	
	Then for any $r$-tuple $(n_1, n_2, \dots , n_r)$ of positive integers with $n_1 >n_2 > \dots >n_r \geq 1$ and any integer $k \geq 0$, we have
	\begin{align*}
		||f_1||_S \leq \frac{b(b+1) \cdots (b+k)}{(k+1)!}\left|\dfrac{q^{{n_1}(k+1)}}{{[n_1]}^{k+1}}\right|\bigg\lVert\dfrac{q^{n_1s_1+n_2s_2+ \dots +n_rs_r}}{{[n_1]}^{s_1}[n_2]^{s_2} \dots [n_r]^{s_r}}\bigg\rVert_S, \hspace{1cm}(\text{since}~ q < 1 ).
	\end{align*} 
	We can see that, $\left|\dfrac{q^{{n_1}(k+1)}}{{[n_1]}^{(k+1)}}\right| < c^{(k+1)}$, for some $c <1$.
	The family $$\Bigg(\bigg\lVert\dfrac{q^{n_1s_1+n_2s_2+ \dots +n_rs_r}}{{[n_1]}^{s_1}[n_2]^{s_2} \dots [n_r]^{s_r}} \bigg\rVert_S\Bigg)_{n_1 >n_2 > \dots >n_r \geq 1}$$ is summable by Corollary \ref{c1} and the series $\sum_{k \geq 0}\frac{b(b+1) \cdots (b+k)}{(k+1)!}c^{(k+1)}$ converges. This implies that the family $\big(||f_1||_S\big)_{n_1 >n_2 > \dots >n_r \geq 1,~ k \geq 0}$ is normally summable on $S$, which is our desired result. 
\end{proof}
Now taking summation over $n$  from $1$ to $\infty$ on both sides of \eqref{eq4}, we get
\begin{align}
	\label{5.8}
	&1= \sum_{n \geq 1} \bigg[{\bigg(\frac{q^{n}}{[n]}\bigg)}^{s}(q^{-s}-1)+\sum_{k \geq 0} {(-1)}^{k} \frac{s(s +1) \cdots (s + k )}{(k+1)!}{\bigg(\frac{q^n}{[n]}\bigg)}^{s+k+1}\bigg].
\end{align}
For Re$(s) >1$,	\eqref{5.8} gives the following translation formula for the $q$-analogue of the Riemann zeta function
\begin{align*}
	&1= (q^{-s}-1)\z_q(s) +\sum_{k \geq 0} {(-1)}^{k} \frac{s(s +1) \cdots (s + k )}{(k+1)!}\z_q(s+k+1) \nonumber \\
	& \implies q^{s}=(1-q^{s})\z_q(s)+ q^{s}\sum_{k \geq 0} {(-1)}^{k} \frac{s(s +1) \cdots (s + k )}{(k+1)!}\z_q(s+k+1).
\end{align*}
Letting $q \rightarrow 1$, we obtain a translation formula for the Riemann zeta function  with alternating signs as follows:
\begin{align*}
	1=\sum_{k \geq 0} {(-1)}^{k} \frac{s(s +1) \cdots (s + k )}{(k+1)!}\z(s+k+1).
\end{align*}
This formula is similar to that given by Ramanujan in \cite{5.14}.
Now \eqref{eq4} can be written as
\begin{align}
	\label{eq5.454}
	{\Bigg(\frac{[n]}{q^{n-1}}\Bigg)}^{-s} - {\Bigg(\frac{[n+1]}{q^n}\Bigg)}^{-s}={\bigg(\frac{q^{n}}{[n]}\bigg)}^{s}(q^{-s}-1)+\sum_{k \geq 0} {(-1)}^{k} \frac{s(s +1) \cdots (s + k )}{(k+1)!}{\bigg(\frac{q^n}{[n]}\bigg)}^{s+k+1}.
\end{align}
Taking $n=n_{1}$ and summing over $n_1$ from $n_2+1$ to $\infty$ on both sides of \eqref{eq5.454}, we get
\begin{align}
	&{\bigg(\frac{[n_2+1]}{q^{n_2}}\bigg)}^{-s_1}=\sum_{n_1=n_2+1}^{\infty}\bigg[{\bigg(\frac{q^{n_1 }}{[n_1]}\bigg)}^{s_1}(q^{-s_1}-1)+\sum_{k \geq 0} {(-1)}^{k} \frac{s_1(s_1 +1) \cdots (s_1 + k )}{(k+1)!}{\bigg(\frac{q^{n_1}}{[n_1]}\bigg)}^{s_1+k+1}\bigg]\nonumber\\
	&\implies q^{n_2s_1}{\big([n_2]+q^{n_2}\big)}^{-s_1}=\sum_{n_1=n_2+1}^{\infty}\bigg[{\bigg(\frac{q^{n_1 }}{[n_1]}\bigg)}^{s_1}(q^{-s_1}-1)\nonumber\\
	&\hspace{5cm} +\sum_{k \geq 0} {(-1)}^{k} \frac{s_1(s_1 +1) \cdots (s_1 + k )}{(k+1)!}{\bigg(\frac{q^{n_1}}{[n_1]}\bigg)}^{s_1+k+1}\bigg].\nonumber 
\end{align}
Expanding L.H.S of the above equation as a Taylor series in $\frac{q^{n_2}}{[n_2]}$, we get 
\begin{align}
	\label{eq7}
	& \sum_{k \geq 0} {(-1)}^{k} \frac{s_1 (s_1 +1)\cdots (s_1 + k -1)}{k!}{\bigg(\frac{q^{n_2}}{[n_2]}\bigg)}^{s_1+k} \nonumber \\ &=\sum_{n_1=n_2+1}^{\infty}\bigg[{\bigg(\frac{q^{n_1 }}{[n_1]}\bigg)}^{s_1}(1-q^{-s_1})+\sum_{k \geq 0} {(-1)}^{k} \frac{s_1(s_1 +1)\cdots (s_1 + k )}{(k+1)!}{\bigg(\frac{q^{n_1}}{[n_1]}\bigg)}^{s_1+k+1}\bigg].
\end{align}
Multiplying both sides of \eqref{eq7} by $ \prod_{i=2}^{r}{\Big(\frac{q^{n_i}}{[n_i]}\Big)}^{s_i}$ and taking summation over $n_r$ from $1$ to $\infty$, over $n_{r-1}$ from $n_r+1$ to $\infty$ and so on up to over $n_2$ from $n_3+1$ to $\infty$, we get
\begin{align}
	\label{eq8}
	&\sum_{n_2 > n_3 > \dots >n_r \geq 1}\sum_{k \geq 0} {(-1)}^{k} \frac{s_1 (s_1 +1)\cdots (s_1 + k -1)}{k!}\dfrac{q^{{n_2}{(s_1+s_2+k)}}q^{n_3s_3+ \dots +n_rs_r}}{[n_2]^{s_1+s_2+k} [n_3]^{s_3}\dots [n_r]^{s_r}} \nonumber \\
	&= (q^{-s_1}-1)\sum_{n_1 >n_2 > \dots >n_r \geq 1}\dfrac{q^{n_1s_1+n_2s_2+ \dots +n_rs_r}}{{[n_1]}^{s_1}[n_2]^{s_2} \dots [n_r]^{s_r}} \nonumber \\
	& \ \ + \sum_{n_1 >n_2 > n_3 > \dots >n_r \geq 1}\sum_{k \geq 0} {(-1)}^{k} \frac{s_1 (s_1 +1)\cdots (s_1 + k )}{(k+1)!}\dfrac{q^{{n_1}{(s_1+k+1)}}q^{{n_2}{s_2}+n_3s_3+ \dots +n_rs_r}}{{[n_1]}^{s_1+k+1}[n_2]^{s_2} [n_3]^{s_3}\dots [n_r]^{s_r}}.
\end{align}

By Proposition \ref{pro1}, we can easily see that all the series of functions appearing in \eqref{eq8} are normally convergent on any compact subset of $U_r$. This immediately gives the next theorem.	
\begin{theorem}
	\label{th1}
	Let $r  \geq 2$ be any positive integer. Then for any $(s_1, s_2, \dots ,s_r)\in U_r$, we have 
	\begin{align}
		\label{eq12}
		&\sum_{k \geq 0} {(-1)}^{k} \frac{s_1 (s_1 +1)\cdots (s_1 + k -1)}{k!}\z_q(s_1+s_2+k,s_3, \dots , s_r)\nonumber \\
		& =\sum_{k \geq 0}{(-1)}^{k} \frac{s_1(s_1 +1) \cdots (s_1 + k )}{(k+1)!}\z_q(s_1+k+1, s_2 , \dots , s_r)+( q^{-s_1}-1)\z_q(s_1,s_2, \dots , s_r),
	\end{align}
	where both the series appearing in \eqref{eq12} converge normally on any compact subset of $U_r$. 	
\end{theorem}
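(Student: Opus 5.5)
The plan is to start from the identity \eqref{eq8}, which the preceding computation gives pointwise for suitable $(s_1,\dots,s_r)$. We then justify every interchange of summation by normal convergence on compact subsets of $U_r$. Finally we identify each triple or multiple sum with a value of $\z_q$.

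\textbf{Step 1 (the left side of \eqref{eq8}).} The Taylor expansion of $(1+x)^{-s_1}$ at $x=q^{n_2}/[n_2]$ is legitimate, since this ratio is at most $q<1$. In fact $q^{n}/[n]\le q^n\cdot\frac{1-q}{1-q^n}\le q$ for $n\ge1$, so we may take $c=q$ uniformly. We bound the $(n_2,\dots,n_r,k)$-term on a compact $S\subset U_r$ exactly as in the proof of Proposition~\ref{pro1}. It is at most $\frac{b(b+1)\cdots(b+k-1)}{k!}c^{k}$ times
$$\Big\lVert \dfrac{q^{n_2(s_1+s_2)+n_3s_3+\dots+n_rs_r}}{[n_2]^{s_1+s_2}[n_3]^{s_3}\cdots[n_r]^{s_r}}\Big\rVert_S .$$
Here $(s_1+s_2,s_3,\dots,s_r)$ ranges over a compact subset of $U_{r-1}$, because the conditions $\text{Re}(s_1+\dots+s_j)>0$ for $j\ge2$ are exactly the defining conditions of $U_{r-1}$ for that tuple. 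Corollary~\ref{c1} in depth $r-1$ therefore gives summability over $n_2>\dots>n_r$. The $k$-series $\sum_k \frac{(b)_k}{k!}c^k=(1-c)^{-b}$ is finite. This gives normal summability of the whole family, so we may sum over the $n$'s first for each fixed $k$. That yields $\z_q(s_1+s_2+k,s_3,\dots,s_r)$, which lies in the domain since $\text{Re}(s_1+s_2+k)>0$. The case $r=2$ is handled the same way with the depth-one $\z_q$.

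\textbf{Step 2 (the right side of \eqref{eq8}).} The term $(q^{-s_1}-1)\sum\cdots$ is $(q^{-s_1}-1)\z_q(s_1,\dots,s_r)$ by Corollary~\ref{c1}. For the double sum, Proposition~\ref{pro1} gives normal summability of the family indexed by $(n_1,\dots,n_r,k)$. Fubini for normally summable families lets us sum over $n_1>\dots>n_r$ first, producing $\z_q(s_1+k+1,s_2,\dots,s_r)$; the tuple stays in $U_r$ since adding $k+1$ to $s_1$ only increases each partial real part. The same bound shows $\sum_k$ of these converges normally on $S$.

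\textbf{Step 3 (assembling).} The multiplication of \eqref{eq7} by $\prod_{i\ge2}(q^{n_i}/[n_i])^{s_i}$ and the iterated summation over $n_2,\dots,n_r$ are now justified, because every series involved is absolutely summable. Substituting Steps 1 and 2 into \eqref{eq8} gives \eqref{eq12} on all of $U_r$.

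The main obstacle is the convergence of \eqref{eq7} itself for $n_1$ from $n_2+1$ to $\infty$ for arbitrary $s_1$, since the telescoping needs $([n_1]/q^{n_1-1})^{-s_1}\to0$. This holds because $[n]/q^{n-1}\to\infty$ while $\text{Re}(s_1)>0$ on $U_r$. I would check this and the uniformity of $c<1$ carefully; the rest is bookkeeping with normal convergence.
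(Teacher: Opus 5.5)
Your proposal is correct and follows the paper's route. The paper likewise derives \eqref{eq8} from the telescoped identity \eqref{eq7} and concludes by normal summability (Proposition~\ref{pro1} and Corollary~\ref{c1}), omitting details; you supply them explicitly: the left-hand family via Corollary~\ref{c1} in depth $r-1$, the uniform constant $c=q$, and the vanishing of $([N+1]/q^{N})^{-s_1}$ for $\text{Re}(s_1)>0$.
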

\begin{proof}
	The proof is simple and hence omitted.
\end{proof}
Eq. \eqref{eq12} is the translation formula for $q$-multiple zeta function of depth $r$.
Using this translation formula, we show that the $q$-multiple zeta function of depth $r$ can be
extended meromorphically to the whole complex plane $\c$.	
\begin{theorem}
	\label{th2}
	Let $r \geq 2$ be any positive integer. Then the $q$-multiple zeta function of depth $r$ extends to a meromorphic function on $\mathbb{C}^r$ satisfying 
	\begin{align}
		\label{eq14}
		&\sum_{k \geq 0} {(-1)}^{k} \frac{s_1 (s_1 +1)\cdots (s_1 + k -1)}{k!}\z_q(s_1+s_2+k,s_3, \dots , s_r)\nonumber \\
		& =\sum_{k \geq 0}{(-1)}^{k} \frac{s_1(s_1 +1) \cdots (s_1 + k )}{(k+1)!}\z_q(s_1+k+1, s_2 , \dots , s_r)+( q^{-s_1}-1)\z_q(s_1,s_2, \dots , s_r),
	\end{align}
	where both the series appearing in \eqref{eq14} converge normally on any compact subset of $U_r$. 	 	
\end{theorem}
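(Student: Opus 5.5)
The proof will go by induction on the depth $r$, following the scheme of Mehta, Saha and Viswanadham. We solve the translation formula \eqref{eq12} for $\z_q(s_1,\dots,s_r)$:
\begin{align*}
(q^{-s_1}-1)\z_q(s_1,\dots,s_r)=&\sum_{k\ge 0}(-1)^k\frac{s_1\cdots(s_1+k-1)}{k!}\z_q(s_1+s_2+k,s_3,\dots,s_r)\\
&-\sum_{k\ge 0}(-1)^k\frac{s_1\cdots(s_1+k)}{(k+1)!}\z_q(s_1+k+1,s_2,\dots,s_r).
\end{align*}
The first sum contains only depth $r-1$ functions, which are meromorphic on $\mathbb{C}^{r-1}$ by the induction hypothesis. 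The second sum contains depth $r$ functions evaluated at points whose first coordinate has been shifted by $k+1$.

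For the base case we need $\z_q(s)$ for $r=1$ to be meromorphic on $\mathbb{C}$. This follows from the depth one translation formula $q^{s}=(1-q^{s})\z_q(s)+q^s\sum_{k\ge0}(-1)^k\frac{s\cdots(s+k)}{(k+1)!}\z_q(s+k+1)$ by the same bootstrapping argument described below. Alternatively it is Kaneko et al.'s result.

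For the inductive step we extend successively to the open sets
\[U_r(m)=\{(s_1,\dots,s_r)\in\mathbb{C}^r: \text{Re}(s_1)>-m,\ \text{Re}(s_1+\dots+s_j)>0\ (2\le j\le r)\}\]
for $m=0,1,2,\dots$. Actually $U_r$ itself constrains $\text{Re}(s_1)>0$. So we first extend in $s_1$, and handle the other constraints $\text{Re}(s_1+\dots+s_j)>0$ afterwards. For those we use the translation formula in the first slot of the depth $r-1$ pieces together with the induction hypothesis; an equivalent route is to treat the general tuples through the variables $s_1+\dots+s_j$. The cleanest way is to note that $\z_q(s_1+s_2+k,s_3,\dots)$ is meromorphic on all of $\mathbb{C}^r$ by induction. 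The second sum, restricted to $k\ge K$, converges normally on compact subsets of $U_r(m)$ for suitable $K$, using the estimate of Proposition \ref{pro1}. That estimate says the terms are bounded by $\frac{b(b+1)\cdots(b+k)}{(k+1)!}c^{k+1}$ times a uniformly summable family, and this persists as long as $(s_1+k+1,s_2,\dots,s_r)$ stays in a compact subset of the previously treated region. The finitely many terms with $k<K$ are meromorphic on $U_r(m)$ by the previous step. Dividing by $q^{-s_1}-1$ introduces poles only along $s_1\in\frac{2\pi i}{\log q}\mathbb{Z}$, which is fine for meromorphy. So $\z_q$ extends to $U_r(m+1)$, and the identity \eqref{eq14} persists by the identity theorem. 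Since $\bigcup_m U_r(m)$ must eventually exhaust $\mathbb{C}^r$, the remaining conditions on $\text{Re}(s_1+\dots+s_j)$ for $j\ge2$ need their own treatment. My plan is to define the sets via all partial sums, $\text{Re}(s_1+\dots+s_j)>-m$, and iterate. The first sum needs $(s_1+s_2+k,s_3,\dots)$, which is covered by induction everywhere. The second sum shifts every partial sum by $k+1\ge1$, so it lands in the previous region, and the recursion closes.

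The main obstacle is the uniform convergence of the infinite $k$-sums on the enlarged region, where the functions are now meromorphic rather than given by convergent series, so Proposition \ref{pro1} does not apply directly. I would first prove the bound $|\z_q(s_1+k+1,s_2,\dots,s_r)|\le C\,c^{k}$ for $k$ large, uniformly on compacta. This comes from the series representation: once $k$ is large, $(s_1+k+1,\dots)$ lies in $U_r$, and the leading factor $\left(q^{n_1}/[n_1]\right)^{k+1}\le q^{k+1}$ gives geometric decay. Combined with the growth $\frac{b(b+1)\cdots(b+k)}{(k+1)!}=O(k^{b})$, this gives normal convergence after removing finitely many terms, which settles the step.
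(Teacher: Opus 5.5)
Your proposal is correct and, once you drop the false start with regions constrained only in $\mathrm{Re}(s_1)$ and work with $\{\mathrm{Re}(s_1+\dots+s_j)>-m \text{ for all } j\}$ (the paper's $U_r(N)$), it is essentially the paper's proof. That proof is an induction on $r$ resting on Kaneko et al.\ for depth one, then an induction on $m$ in which the shifted points $(s_1+k+1,s_2,\dots,s_r)$ land in the previous region and the tail $k\ge m-1$ is controlled through the series representation. The one point to state precisely is the tail bound: on a compact $S\subset U_r(m)$ split $(q^{n_1}/[n_1])^{s_1+k+1}=(q^{n_1}/[n_1])^{k+1-m}(q^{n_1}/[n_1])^{s_1+m}$. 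The geometric factor is then $q^{k+1-m}$ (not $q^{k+1}$), and the remaining family is uniformly summable because $S+(m,0,\dots,0)$ is a compact subset of $U_r$; this is exactly the estimate the paper uses.
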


\begin{proof}
	We prove by induction on $r$. 
	When $r =2$, the left hand side of \eqref{eq14} is a meromorphic function because it is a normally convergent series of the $q$-analogue of the Riemann zeta function, which is meromorphic on $\mathbb{C}$ by \cite{2}.
	Then for $r \geq 3$, the left
	hand side of \eqref{eq14} is a meromorphic function by the induction hypothesis.
	For any $N \geq 0$, let $U_r(N)$ denote the open subset of $\mathbb{C}^r$ defined by
	$$\text{Re}(s_1+ \dots +s_j)>-N, \hspace{1cm}\text{for}~ j=1, 2, \dots , r.$$
	We will prove by induction on $N$ that the $q$-multiple zeta function of depth $r$ extends to a meromorphic function on $U_r(N)$. 
	Since ${(U_r(N))}_{N \geq  0}$ is an open covering of $\mathbb{C}^r$, Theorem \ref{th2} will follow. 
	For $N=0$, we have $U_r(N)=U_r$. So by Theorem \ref{th1}, it is true.
	Assume now $N \geq 1$, and the theorem is true for $N-1$, i.e., any $q$-multiple zeta function of depth $r$ can be extended to a meromorphic function on $U_r(N-1)$.  For $k \geq 0$,  $(s_1+k, s_2, \dots , s_r) \in U_r(N) \implies (s_1+k+1, s_2, \dots , s_r) \in U_r(N-1)$, because $\text{Re}(s_1+k+s_2+\dots +s_j)>-N \implies \text{Re}(s_1+k+1+s_2+\dots +s_j)>-N+1$, for $j=1, 2, \dots , r$. Then on the R.H.S. of \eqref{eq14}, except possibly $\z_q(s_1, s_2, \dots , s_r)$,  all the terms are meromorphic on $U_r(N)$ and  corresponding to $k \geq N-1$, all are holomorphic on $U_r(N)$.
	In order to show that the series
	\begin{align}
		\label{eq177} 
		\sum_{k \geq N-1}{(-1)}^{k} \frac{s_1(s_1+1) \dots (s_1 + k)}{(k+1)!}\z_q(s_1+k+1, s_2 , \dots , s_r)
	\end{align}
	is a holomorphic function on $U_r(N)$; it is enough to show that it is normally convergent on any compact subset $S$ of $U_r(N)$.
	As in the proof of Theorem 1, let $b$ be the supremum of $|s_1|$ in $S$. Then for any $r$ tuple $(n_1, n_2, \dots , n_r)$ of positive integers with $n_1 >n_2 > \dots >n_r \geq 1$, and any integer $k \geq N-1$, 
	\begin{align*}
		&\Bigg\|{(-1)}^{k}  \frac{s_1(s_1 +1) \cdots (s_1 + k )}{(k+1)!}\dfrac{q^{{n_1}{(s_1+k+1)}}q^{n_2s_2+n_3s_3+ \dots +n_rs_r}}{{[n_1]}^{s_1+k+1}[n_2]^{s_2} [n_3]^{s_3}\dots [n_r]^{s_r}}\Bigg\|_S
	\end{align*}
	is bounded above by 
	\begin{align*}
		&	\frac{b(b+1) \cdots (b+k)}{c^{k-N}(k+1)!}\Bigg\|\dfrac{q^{{n_1}{(s_1+N)}}q^{n_2s_2+n_3s_3+ \dots +n_rs_r}}{{[n_1]}^{s_1+N }[n_2]^{s_2} [n_3]^{s_3}\dots [n_r]^{s_r}}\Bigg\|_S,
	\end{align*}
	where $c>1,$
	since $q < 1 \implies \dfrac{q^{n_1}}{\frac{1-q^{n_1}}{1-q}}=\dfrac{q^{n_1}}{[n_1]} < 1 \text{ and } k+1 \geq N \implies {\bigg(\dfrac{q^{n_1}}{[n_1]}\bigg)}^{k+1}\leq  {\bigg(\dfrac{q^{n_1}}{[n_1]}\bigg)}^{N}.$ 
	Now the family $\Bigg( \bigg\|\dfrac{q^{{n_1}{(s_1+N)}}q^{n_2s_2+n_3s_3+ \dots +n_rs_r}}{{[n_1]}^{s_1+N }[n_2]^{s_2} [n_3]^{s_3}\dots [n_r]^{s_r}}\bigg\|_S\Bigg)_{n_1 >n_2> \dots >n_r\geq 1}$ is summable as $(s_1+N, s_2, \dots , s_r) \in U_r$. Also the series $\sum_{k\geq N-1} \frac{b(b+1) \cdots (b+k)}{c^{k-N}(k+1)!}$ is convergent.
	Hence the series \eqref{eq177} is holomorphic on $U_r(N)$ 
	and thus $\z_q(s_1, \dots, s_r)$ is meromorphic on  $U_r(N)$. This completes the proof.   	
\end{proof}

\section{ Matrix representation of the translation formula.}
As mentioned in the introduction, we now present a matrix formulation of the translation formula \eqref{eq14}.  Let $M=(m_{ij})_{\mathbb{N}\times \mathbb{N}}$ be a matrix indexed by $\mathbb{N}\times \mathbb{N}$, where each entry $m_{ij}$ is a meromorphic function on $\mathbb{C}^r$. Further, let $u=(u_j)$ and $v=(v_j)$ be column vectors indexed by $\mathbb{N}$, whose entries are meromorphic functions on $\mathbb{C}^r$. Then we say that $Mu=v$ if, for every $i\in \mathbb{N}$, the series
$\sum_{j\in \mathbb{N}} m_{ij}u_j$ converges normally on any compact subset of $\mathbb{C}^r$ and the resulting sum equals $v_j$. Using these notations, the translation formula \eqref{eq14} can be regarded as the first element of an infinite family of identities arising from successive applications of the shift $s_1\r s_1+1$  to both sides of \eqref{eq14}. Adopting this viewpoint, we formulate the entire family of relations in terms of infinite matrices, as follows: 
\begin{align}
	\label{eq19}
	N(s_1 )V(s_1+s_2, s_3, \dots , s_r)=M(s_1)V(s_1, s_2, \dots, s_r),
\end{align}
where 
\begin{align}
	\label{equn5.16}
	V(s_1, s_2, \dots, s_r)=	
	\begin{pmatrix}
		\z_q(s_1, s_2, \dots , s_r)\\
		\z_q(s_1+1, s_2, \dots , s_r)\\
		\z_q(s_1+2, s_2, \dots , s_r)\\
		\vdots 
	\end{pmatrix},
\end{align}
\begin{align}
	\label{equn5.17}
	M(t)=	
	\begin{pmatrix}
		1 & \dfrac{t}{(q^{-t}-1)} & -\dfrac{t(t+1)}{2!(q^{-t}-1)} & \dfrac{t(t+1)(t+2)}{3!(q^{-t}-1)} & -\dfrac{t(t+1)(t+2)(t+3)}{4!(q^{-t}-1)} & \dots \\
		0 & 1 &\dfrac{t+1}{(q^{-(t+1)}-1)} & -\dfrac{(t+1)(t+2)}{2!(q^{-(t+1)}-1)} & \dfrac{(t+1)(t+2)(t+3)}{3!(q^{-(t+1)}-1)} & \dots \\
		0 & 0 & 1 & \dfrac{t+2}{(q^{-(t+2)}-1)} & -\dfrac{(t+2)(t+3)}{2!(q^{-(t+2)}-1)} & \dots \\
		\vdots & \vdots & \vdots & \vdots & \vdots & \ddots
	\end{pmatrix}
\end{align}
\begin{align*}
	\text{and }~ N(t)=
	\begin{pmatrix}
		\dfrac{1}{(q^{-t}-1)} & -\dfrac{t}{(q^{-t}-1)} & \dfrac{t(t+1)}{2!(q^{-t}-1)} & -\dfrac{t(t+1)(t+2)}{3!(q^{-t}-1)}  & \dots \\
		0 & \dfrac{1}{(q^{-(t+1)}-1)} &-\dfrac{t+1}{(q^{-(t+1)}-1)} & +\dfrac{(t+1)(t+2)}{2!(q^{-(t+1)}-1)}  & \dots \\
		0 & 0 & \dfrac{1}{(q^{-(t+2)}-1)} & -\dfrac{t+2}{(q^{-(t+2)}-1)} &  \dots \\
		0 &0 &0 & \dfrac{1}{(q^{-(t+3)}-1)} & \dots\\ 
		\vdots & \vdots & \vdots & \vdots  & \ddots 
	\end{pmatrix}.
\end{align*}
Our translation formula \eqref{eq14} is given by the first identity obtained from \eqref{eq19}. 
Since $M(s_1)$ is an upper triangular matrix with all the diagonal entries $1$, $M(s_1)^{-1}$ exists. We will show in due time that one can express the column vector $V(s_1, \dots, s_r)$ in terms of $V(s_1+s_2, s_3, \dots, s_r)$ by multiplying both sides of \eqref{eq19} by $M(s_1)^{-1}$. First we need to obtain $M(s_1)^{-1}$. 
By \cite[Theorem 3]{9}, we get
\begin{align*}
	M(t)^{-1}={(m_{i,j})}_{i,j \in \mathbb{N}}, 
\end{align*}
where  
$$m_{k,n} =
\begin{cases}
	1& \text{if $k=n$},\\
	{(-1)}^{n-k }D_{k,n}(t)& \text{if $k \leq n-1$},\\
	0& \text{if $k>n$}.
\end{cases}
$$
Here for $2 \leq k \leq n-1$, $D_{k,n}(t)$ is given by the recurrence relation $D_{k,n}(t)=D_{1,(n-k+1)}(t+k-1)$, where 
\begin{align*}
	D_{1,n}(t)	=
	\begin{vmatrix}
		\dfrac{t}{(q^{-t}-1)} & -\dfrac{t(t+1)}{2!(q^{-t}-1)} & \dfrac{t(t+1)(t+2)}{3!(q^{-t}-1)} &  \dots &{(-1)}^{n}\dfrac{t(t+1) \cdots (t+n-2)}{(n-1)!(q^{-t}-1) } \\
		1 &\dfrac{t+1}{(q^{-(t+1)}-1)} & -\dfrac{(t+1)(t+2)}{2!(q^{-(t+1)}-1)}& \dots & {(-1)}^{n-1}\dfrac{(t+1)(t+2)\cdots (t+n-2)}{(n-2)!(q^{-(t+1)}-1) } \\
		0 & 1 & \dfrac{t+2}{(q^{-(t+2)}-1)} & \dots & {(-1)}^{n-2}\dfrac{(t+2)(t+3)\cdots (t+n-2)}{(n-3)!(q^{-(t+2)}-1) } \\
		\vdots & \vdots & \vdots & \vdots & \vdots \\
		0 & 0 &0 & \dots 1 & \dfrac{(t+n-2)}{(q^{-(t+n-2)}-1)} 
	\end{vmatrix}.
\end{align*}
In this particular case, we will find the complete expansion of the determinant $D_{1, n}(t)$, hence obtaining the expansion of $D_{k, n}(t)$. We will then use $D_{1, n}(t)$ to obtain the inverse of the infinite triangular matrix $M(t)$. At this point, we prove the following proposition.
\begin{proposition}
	\label{p5.3}
	The value of the determinant
	\begin{align*}
		\mathcal{G}=
		\begin{vmatrix}
			a_{1,1} & -a_{1,2}& a_{1,3} &  -a_{1,4} & \dots &{(-1)}^{n-1}a_{1,n} \\
			a_{2,1} &a_{2,2} & -a_{2,3}& a_{2,4} &  \dots &{(-1)}^{n-2}a_{2,n} \\
			0 & a_{3,2} & a_{3,3} & -a_{3,4} &  \dots &{(-1)}^{n-3}a_{3,n}\\
			\vdots & \vdots & \vdots & \vdots & \vdots & \vdots \\
			0 & 0 &0 & 0 &\dots a_{n,n-1} & a_{n, n} 
		\end{vmatrix}
	\end{align*}
	is given by \begin{align}
		\label{eq6.222}
		\sum\limits_{\substack{\sigma \in \mathfrak{S}_n; \sigma(j) \geq j-1,\\ 2 \leq j \leq n}} a_{1,\sigma(1)}a_{2,\sigma(2)} \dots a_{n,\sigma(n)},
	\end{align} 
	where $\mathfrak{S}_n$ is the set of all permutations on $\{1, 2, \dots, n\}$.
\end{proposition}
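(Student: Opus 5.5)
The plan is to compute $\mathcal{G}$ directly from the Leibniz expansion and show that every nonzero term carries a $+$ sign.

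Write $\mathcal{G}=\det(g_{i,j})$. Then $g_{i,j}=(-1)^{j-i}a_{i,j}$ for $j\geq i$, $g_{i,i-1}=a_{i,i-1}$, and $g_{i,j}=0$ for $j<i-1$. By the Leibniz formula,
$$\mathcal{G}=\sum_{\sigma\in\mathfrak{S}_n}\operatorname{sgn}(\sigma)\prod_{i=1}^n g_{i,\sigma(i)}.$$
A term is nonzero only if $\sigma(i)\geq i-1$ for all $i$. This is automatic for $i=1$, so the surviving $\sigma$ are exactly those indexing the sum in \eqref{eq6.222}.

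Next I would compute the sign a surviving term picks up from the $g$'s. It is $\prod_{i:\sigma(i)\geq i}(-1)^{\sigma(i)-i}$. Since $\sum_{i=1}^n(\sigma(i)-i)=0$ and every $i$ with $\sigma(i)<i$ has $\sigma(i)-i=-1$, we get
$$\sum_{i:\sigma(i)\geq i}(\sigma(i)-i)=d(\sigma),\qquad d(\sigma):=\#\{i:\sigma(i)=i-1\}.$$
Hence the term equals $\operatorname{sgn}(\sigma)(-1)^{d(\sigma)}\prod_i a_{i,\sigma(i)}$, and the proposition reduces to the identity $\operatorname{sgn}(\sigma)=(-1)^{d(\sigma)}$ for every $\sigma$ with $\sigma(j)\geq j-1$ for $2\leq j\leq n$.

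The main step is a structural lemma: every cycle of such a $\sigma$ is an interval $\{a,a+1,\dots,m\}$, on which $\sigma(m)=m-1$, $\sigma(m-1)=m-2$, \dots, $\sigma(a+1)=a$, and $\sigma(a)=m$. To prove it, take a cycle with minimum $a$ and maximum $m$, and follow the orbit starting from $m$. Every downward step has size exactly $1$. Suppose we have descended from $m$ to some $x>a$; all values in $(x,m]$ have already been visited on this stretch. An upward step from $x$ would therefore land either above $m$, which is impossible, or on an already visited element other than the start, which is also impossible. So the walk must go $m\to m-1\to\cdots\to a$, and then it closes with $a\to m$. A cycle on $\ell=m-a+1$ elements has sign $(-1)^{\ell-1}$ and contains exactly $\ell-1$ positions with $\sigma(i)=i-1$. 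Taking the product over all cycles (fixed points being the case $\ell=1$) gives $\operatorname{sgn}(\sigma)=(-1)^{d(\sigma)}$, and therefore every term of the expansion appears with coefficient $+1$, which yields \eqref{eq6.222}.

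I expect the only delicate point to be writing the cycle-structure argument cleanly. As a fallback, I would prove the proposition by induction on $n$, expanding $\mathcal{G}$ along its last column. Deleting row $i$ and column $n$ leaves a block upper triangular matrix whose lower-right block has the $a_{k,k-1}$ on its diagonal. The sign $(-1)^{i+n}\cdot(-1)^{n-i}=1$ then matches the permutations with $\sigma(i)=n$, $\sigma(k)=k-1$ for $i<k\leq n$, together with an arbitrary admissible permutation of $\{1,\dots,i-1\}$.
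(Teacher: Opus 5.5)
Your proof is correct, but your main argument takes a different route from the paper's; your fallback is essentially what the paper does.

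\textbf{The paper's route.} The paper quotes the recurrence for upper Hessenberg determinants, $d_k=a_{k,k}d_{k-1}+\sum_{i=1}^{k-1}(-1)^{k-i}a_{i,k}\prod_{j=i}^{k-1}a_{j+1,j}\,d_{i-1}$, which is just cofactor expansion along the last column. It observes that the factor $(-1)^{k-i}$ cancels the sign carried by the $(i,k)$ entry of $\mathcal{G}$, so every term enters with a plus sign. It then inducts on $k$.

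\textbf{Your route.} Your Leibniz argument is global and non-inductive. The reduction to $\mathrm{sgn}(\sigma)=(-1)^{d(\sigma)}$ via $\sum_i(\sigma(i)-i)=0$ is clean, and the interval-cycle lemma carries the real content. It gives three things the paper's proof does not:
\begin{itemize}
\item It is self-contained and needs no external recurrence.
\item It describes the index set explicitly. Admissible permutations correspond to splittings of $\{1,\dots,n\}$ into consecutive blocks, and a block $\{a,\dots,m\}$ contributes $a_{a,m}\prod_{j=a+1}^{m}a_{j,j-1}$. So there are $2^{n-1}$ terms, matching the $16$ terms listed in the Example for $n=5$.
\item It justifies a step the paper only asserts. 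The paper's induction step claims the regrouped terms form the full sum over admissible permutations. That tacitly uses the fact that $\sigma(i)=k$ forces $\sigma(j)=j-1$ for $i<j\le k$ and $\sigma(\{1,\dots,i-1\})=\{1,\dots,i-1\}$. This is exactly your lemma applied to the cycle through $k$, and your fallback's ``matches'' needs the same fact to be complete.
\end{itemize}
In return, the paper's route produces the all-plus recurrence $d_k=a_{k,k}d_{k-1}+\sum_{i<k}a_{i,k}\prod_{j=i}^{k-1}a_{j+1,j}\,d_{i-1}$, which it reuses directly in the proof of Proposition \ref{p5.4}.

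\textbf{One small fix in the cycle walk.} You rule out an upward step from $x>a$ landing above $m$, or on a visited element other than the start. You should also say why it cannot return to the start $m$: the cycle would then close as $\{x,\dots,m\}$, which does not contain $a$.
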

\begin{proof}
	By the recurrence relation of the determinant of an upper Hessenberg matrix \cite{14}
	\begin{align*}
		\mathcal{C}=
		\begin{pmatrix}
			a_{1,1} & a_{1,2}& a_{1,3} &  a_{1,4} & \dots &a_{1,n} \\
			a_{2,1} &a_{2,2} & a_{2,3}& a_{2,4} &  \dots &a_{2,n} \\
			0 & a_{3,2} & a_{3,3} & a_{3,4} &  \dots &a_{3,n}\\
			\vdots & \vdots & \vdots & \vdots & \vdots & \vdots \\
			0 & 0 &0 & 0 &\dots a_{n,n-1} & a_{n, n} 
		\end{pmatrix},
	\end{align*} we have
	\begin{align}
		\label{e5.22}
		d_0=1, d_k=a_{k,k}d_{k-1}+\sum_{i=1}^{k-1}\big[{(-1)}^{k-i}a_{i,k}\prod_{j=i}^{k-1}a_{j+1,j}d_{i-1}\big],
	\end{align} where $d_k$ is the determinant of the top-left $k\times k$ submatrix. Then $d_n$ is the determinant value of $\mathcal{C}$.
	Comparing $\mathcal{C}$ with the matrix given in $\mathcal{G}$, we can see that the entries $a_{i,k}$ have negative sign when $k>i$ and $k-i $ is odd. So in  case of $\mathcal{G}$, \eqref{e5.22} gives
	\begin{align*}
		d_0=1, d_k=a_{k,k}d_{k-1}+\sum_{i=1}^{k-1}\big[a_{i,k}\prod_{j=i}^{k-1}a_{j+1,j}d_{i-1}\big].
	\end{align*} 
	Therefore, $d_1=a_{1,1}$, $d_2=a_{2,2}a_{1,1}+a_{1,2}a_{2,1}$.
	We prove \eqref{eq6.222} by induction on $k$. Suppose it is true for $d_j,$ where $ j \leq k-1$.
	By the induction hypothesis,
	\begin{align*}
		d_k&=a_{k,k}\sum\limits_{\substack{\sigma \in \mathfrak{S}_{k-1}; \sigma(j) \geq j-1,\\ 2 \leq j \leq k-1}} a_{1,\sigma(1)} \dots a_{k-1,\sigma(k-1)} + a_{1,k}\prod_{j=1}^{k-1}a_{j+1,j}\\
		&+\sum_{i=2}^{k-1}\Bigg[a_{i,k}\prod_{j=i}^{k-1}a_{j+1,j}\sum\limits_{\substack{\sigma \in \mathfrak{S}_{i-1}; \sigma(j) \geq j-1,\\ 2 \leq j \leq i-1}} a_{1,\sigma(1)} \dots a_{i-1,\sigma(i-1)}\Bigg]\\
		&=\sum\limits_{\substack{\sigma \in \mathfrak{S}_k; \sigma(j) \geq j-1,\\ 2 \leq j \leq k}} a_{1,\sigma(1)}a_{2,\sigma(2)} \dots a_{k,\sigma(k)}.	
	\end{align*}
	Hence proved.
\end{proof}
Now we give an application of Proposition \ref{p5.3}.
\begin{example}
	Let $q_i=q_i(t)=q^{-(t+i)}-1$. Then 
	\begin{align}
		\label{equ5.177}
		&D_{1,6}(t)\nonumber\\
		&=
		\begin{vmatrix}
			\dfrac{t}{(q^{-t}-1)} & -\dfrac{t(t+1)}{2!(q^{-t}-1)} & \dfrac{t(t+1)(t+2)}{3!(q^{-t}-1)} &-\dfrac{t(t+1)(t+2)(t+3)}{4!(q^{-t}-1)} & \dfrac{t(t+1) \cdots (t+4)}{5!(q^{-t}-1) } \\
			1 &\dfrac{t+1}{(q^{-(t+1)}-1)} & -\dfrac{(t+1)(t+2)}{2!(q^{-(t+1)}-1)}& \dfrac{(t+1)(t+2)(t+3)}{3!(q^{-(t+1)}-1)} & -\dfrac{(t+1)\cdots (t+4)}{4!(q^{-(t+1)}-1) } \\
			0 & 1 & \dfrac{t+2}{(q^{-(t+2)}-1)} & -\dfrac{(t+2)(t+3)}{2!(q^{-(t+2)}-1)} & \dfrac{(t+2)(t+3) (t+4)}{3!(q^{-(t+2)}-1) }  \\
			0 & 0 & 1 & \dfrac{t+3}{(q^{-(t+3)}-1)} &  -\dfrac{(t+3)(t+4)}{2!(q^{-(t+3)}-1)}\\
			0 & 0 &0 &  1 & \dfrac{(t+4)}{(q^{-(t+4)}-1)} 
		\end{vmatrix}
		\nonumber\\
		&=t(t+1)(t+2)(t+3)(t+4)\Bigg\{ \dfrac{1}{q_0q_1q_2q_3q_4}+\dfrac{1}{2!}\bigg(\dfrac{1}{q_0q_2q_3q_4}+ \dfrac{1}{q_0q_1q_3q_4}+ \dfrac{1}{q_0q_1q_2q_4}+\dfrac{1}{q_0q_1q_2q_3}\bigg)\nonumber\\
		&  +\dfrac{1}{3!}\bigg(\dfrac{1}{q_0q_3q_4}+\dfrac{1}{q_0q_1q_4}+\dfrac{1}{q_0q_1q_2}\bigg)+\dfrac{1}{4!}\bigg(\dfrac{1}{q_0q_4}+\dfrac{1}{q_0q_1}\bigg)+\dfrac{1}{2!2!}\bigg(\dfrac{1}{q_0q_2q_4}+\dfrac{1}{q_0q_2q_3}+\dfrac{1}{q_0q_1q_3}\bigg)\nonumber\\
		&  + \dfrac{1}{5!}\dfrac{1}{q_0} + \dfrac{1}{2!3!}\bigg(\dfrac{1}{q_0q_2}+\dfrac{1}{q_0q_3}\bigg)\Bigg\}.
	\end{align}
\end{example}
\begin{proof}
	Comparing $D_{1, 6}(t)$ with $\mathcal{G}$ for $n=5$ in Proposition \ref{p5.3}, \eqref{eq6.222} gives
	\begin{align*}
		D_{1, 6}(t)=&a_{11}a_{22}a_{33}a_{44}a_{55}+a_{11}a_{22}a_{33}a_{45}a_{54}+a_{11}a_{22}a_{34}a_{43}a_{55}+a_{11}a_{22}a_{35}a_{43}a_{54}+a_{11}a_{23}a_{32}a_{44}a_{55}\\
		& +a_{11}a_{23}a_{32}a_{45}a_{54} +a_{11}a_{24}a_{32}a_{43}a_{55}+a_{11}a_{25}a_{32}a_{43}a_{54}+a_{12}a_{21}a_{33}a_{44}a_{55}+a_{12}a_{21}a_{33}a_{45}a_{54}\\
		& +a_{12}a_{21}a_{34}a_{43}a_{55}+a_{12}a_{21}a_{35}a_{43}a_{54}+a_{13}a_{21}a_{32}a_{44}a_{55}+a_{13}a_{21}a_{32}a_{45}a_{54}+a_{14}a_{21}a_{32}a_{43}a_{55}\\
		& +a_{15}a_{21}a_{32}a_{43}a_{54}.
	\end{align*}
	Since $a_{j+1, j}=1$, for $j=1, \dots 4$, 
	\begin{align*}
		D_{1,6}(t)=&a_{11}a_{22}a_{33}a_{44}a_{55}+a_{11}a_{22}a_{33}a_{45}+a_{11}a_{22}a_{34}a_{55}+a_{11}a_{22}a_{35}+a_{11}a_{23}a_{44}a_{55}\\
		& +a_{11}a_{23}a_{45} +a_{11}a_{24}a_{55}+a_{11}a_{25}+a_{12}a_{33}a_{44}a_{55}+a_{12}a_{33}a_{45}\\
		& +a_{12}a_{34}a_{55}+a_{12}a_{35}+a_{13}a_{44}a_{55}+a_{13}a_{45}+a_{14}a_{55} +a_{15}.
	\end{align*}
	Each term of $D_{1,6}(t)$ contains some factorial quantity. We collect the terms containing the same factorial and keep them together. The only term containing $\dfrac{1}{1!}$ is 
	$$a_{11}a_{22}a_{33}a_{44}a_{55}=t(t+1)(t+2)(t+3)(t+4) \dfrac{1}{q_0q_1q_2q_3q_4}.$$
	The terms containing $$ \dfrac{1}{2!}, \dfrac{1}{3!}, \dfrac{1}{4!}, \dfrac{1}{5!}, \dfrac{1}{2!2!} \text{ and } \dfrac{1}{3!2!},$$ are respectively given by the sets 
	$\{a_{12}a_{21}a_{33}a_{44}a_{55}, a_{11}a_{23}a_{44}a_{55}, a_{11}a_{22}a_{34}a_{55}, a_{11}a_{22}a_{33}a_{45}\}$, \\ $\{a_{13}a_{44}a_{55}, a_{11}a_{24}a_{55}, a_{11}a_{22}a_{35}\} $, $\{a_{14}a_{55}, a_{11}a_{25}\}$, $\{a_{15}\}$, $\{a_{12}a_{33}a_{45}, a_{12}a_{34}a_{55}, a_{11}a_{23}a_{45}$\}, and\\ \{$a_{12}a_{35}, a_{13}a_{45}\}$.
	Adding the terms within each class and then summing the resulting expressions, we get \eqref{equ5.177}.

\end{proof}
The observations made in the last example lead to the following result:
\begin{proposition}
	\label{p5.4}
	Let $n \geq 2$. The value of the determinant $D_{1,n}(t)$ is given by 
	\begin{align*}
		t(t+1) \cdots (t+n-2)\mathcal{L}_n(t),
	\end{align*}
	where 
	\begin{align}
		\label{eq22}
		&\mathcal{L}_n(t)\nonumber\\
		&=\prod_{i=0}^{n-2}\dfrac{1}{ q_i}+\sum_{i=2}^{n-1}\sum_{\substack{(i_1,  \dots , i_j) \in P(i),\\ 1 \leq j \leq [\frac{i}{2}]}} \dfrac{1}{i_1!i_2! \dots i_j!}\sum_{0 \leq k_j \leq k_{j-1} \leq \dots \leq k_1 \leq n-i-1}\prod\limits_{u=0}^{k_j}\dfrac{1}{q_u}\prod\limits_{r=1}^{j-1}\prod_{s=k_{j-r+1}}^{k_{j-r}}\dfrac{1}{q_{(T_r+s)}} \prod\limits_{v=i+k_1}^{n-2}\dfrac{1}{q_v},
	\end{align}
	$P(i)=\{(i_1, i_2, \dots ,i_j):i_1, \dots ,i_j \in \mathbb{N}\backslash \{1\}, j \geq 1, i_1+i_2+ \dots +i_j=i\}$ , $q_i=q_i(t)=q^{-(t+i)}-1$ and
	$T_r=i_1+i_2+ \dots +i_r.
	$
	We assume that $\prod_{v=v_1}^{v_2}\frac{1}{q_v}=1$, when  $v_1>v_2.$
\end{proposition}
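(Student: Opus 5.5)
The plan is to apply Proposition \ref{p5.3} to $D_{1,n}(t)$, viewed as an $(n-1)\times(n-1)$ determinant of the shape $\mathcal{G}$. Index rows $i$ and columns $k$ from $1$ to $n-1$. The entries are
\[
a_{i,k}=\frac{(t+i-1)(t+i)\cdots(t+k-1)}{(k-i+1)!\,q_{i-1}} \quad (k\geq i), \qquad a_{i+1,i}=1.
\]
Proposition \ref{p5.3} expresses $D_{1,n}(t)$ as a sum over permutations with $\sigma(j)\geq j-1$. Since the subdiagonal entries equal $1$, such a permutation is the same thing as a decomposition of $\{1,\dots,n-1\}$ into consecutive blocks $[b,e]$. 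For a block, row $b$ maps to $e$ and each row $j\in(b,e]$ maps to $j-1$. The block therefore contributes
\[
a_{b,e}=\frac{(t+b-1)\cdots(t+e-1)}{(e-b+1)!\,q_{b-1}}.
\]
This is exactly the structure visible in the example for $D_{1,6}(t)$.

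Next, take the product over all blocks of a composition $(\ell_1,\dots,\ell_m)$ of $n-1$. The numerator factors $(t+b-1)\cdots(t+e-1)$ telescope to $t(t+1)\cdots(t+n-2)$ regardless of the composition. The block starting at position $p$ contributes $1/q_{p-1}$, so the denominator becomes $\ell_1!\cdots\ell_m!\prod_{p\in B}q_{p-1}$, where $B$ is the set of block starting positions. This gives
\[
D_{1,n}(t)=t(t+1)\cdots(t+n-2)\sum_{\text{compositions}}\frac{1}{\ell_1!\cdots\ell_m!}\prod_{p\in B}\frac{1}{q_{p-1}}.
\]
Note that $1\in B$ always, which produces the factor $1/q_0$.

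It remains to check that this sum is exactly $\mathcal{L}_n(t)$. The composition with all parts $1$ gives the leading term $\prod_{i=0}^{n-2}1/q_i$. For any other composition, collect its parts of length at least $2$, in order, into a tuple $(i_1,\dots,i_j)$. This tuple lies in $P(i)$, where $i=i_1+\dots+i_j$ with $2\leq i\leq n-1$ and $j\leq[i/2]$, and the remaining $n-1-i$ parts equal $1$. The composition is then determined by how many singletons precede each long block. Encode these counts by nondecreasing indices $0\leq k_j\leq\dots\leq k_1\leq n-i-1$ in the paper's convention, with $T_r$ marking the shift that earlier long blocks induce on positions.

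With this encoding, the starting positions of singletons and long blocks should give precisely the products $\prod_{u=0}^{k_j}$, $\prod_{s=k_{j-r+1}}^{k_{j-r}}1/q_{T_r+s}$ and $\prod_{v=i+k_1}^{n-2}$, up to the indexing convention. The empty-product convention covers the edge cases. The factorial factor is $1/(i_1!\cdots i_j!)$ because the singletons contribute $1!=1$.

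I expect this bookkeeping to be the main obstacle. The first two steps are mechanical; the difficulty is matching the composition encoding to the exact index ranges in \eqref{eq22}. Each interval of $q$-indices must correspond to the block starting positions, the endpoints (where one block ends and the next begins) must be neither double-counted nor omitted, and the choice of $(k_1,\dots,k_j)$ must be in bijection with the compositions having the given long-part tuple. I would first verify the bijection against the explicit expansion of $D_{1,6}(t)$ in the example, then fix the index conventions and carry out the general matching. As a fallback, one could induct on $n$ via the Hessenberg recurrence \eqref{e5.22}, splitting off the last block.
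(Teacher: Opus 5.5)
Your argument is correct, and it takes a genuinely different route from the paper. The paper does not use Proposition~\ref{p5.3} here. It inducts on $n$ through the Hessenberg recurrence $D_{1,n+1}(t)=\sum_{l=1}^{n}\frac{(t+l-1)\cdots(t+n-1)}{(n-l+1)!\,q_{l-1}}D_{1,l}(t)$, substitutes \eqref{eq22} for each $D_{1,l}$, and regroups the result through the auxiliary sums $\mathfrak{U}^{n}_{i,a}(t)$; the crux is the identity \eqref{e6.331}. You instead expand once over compositions $(\ell_1,\dots,\ell_m)$ of $n-1$, which gives
\[
\mathcal{L}_n(t)=\sum_{(\ell_1,\dots,\ell_m)}\ \prod_{r=1}^{m}\frac{1}{\ell_r!\,q_{\ell_1+\cdots+\ell_{r-1}}},
\]
so that \eqref{eq22} is just a reparametrization of this sum.

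The bookkeeping you flagged closes exactly, with no index shift. Let $k_{j-r+1}$ be the number of parts equal to $1$ that precede the $r$-th long part, and count block starting positions from $0$.
\begin{itemize}
\item The leading singletons and the first long part start at $0,\dots,k_j$.
\item The singletons between the $r$-th and $(r+1)$-th long parts, together with the start of the $(r+1)$-th, sit at $T_r+s$ for $k_{j-r+1}\le s\le k_{j-r}$. This range is nonempty and begins just after the $r$-th long part ends.
\item The trailing singletons sit at $i+k_1,\dots,n-2$.
\end{itemize}
So every block start is counted exactly once, and interior positions of long parts are skipped.

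You should add the one-line reason why permutations with $\sigma(j)\ge j-1$ are exactly block decompositions: if $\sigma(1)=e$, then $\sigma(j)=j-1$ is forced for $2\le j\le e$, and you recurse on the rest. Alternatively, you can bypass Proposition~\ref{p5.3} entirely. The recurrence $d_k=\sum_{l=1}^{k}a_{l,k}d_{l-1}$ written in the paper unrolls to your composition sum by splitting off the last block, so your ``fallback'' and your main route coincide up to that point.

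Your route is shorter and explains where \eqref{eq22} comes from. The paper's route stays at the level of the final formula and needs no combinatorial encoding, but at the price of a long, error-prone index manipulation.
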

\begin{proof}
	We use induction on $n$.
	Let $n=2$. Then 
	\begin{align*}
		D_{1,2}(t)=
		\dfrac{t}{(q^{-t}-1)}=t\mathcal{L}_2(t).
	\end{align*}
	Therefore, for $n=2$, the result is true.
	Suppose that $n >2$ and the statement of the proposition is true for all determinants $D_{1,l}(t)$ of order $(l-1)\times(l-1),$ where $l \leq n.$  
	For an $n \times n$ upper Hessenberg matrix with entries $a_{i,k}$, the following recurrence relation holds \cite{14}
	\begin{align}
		\label{e5.223}
		d_0=1, d_k=a_{k,k}d_{k-1}+\sum_{i=1}^{k-1}\big[{(-1)}^{k-i}a_{i,k}\prod_{j=i}^{k-1}a_{j+1,j}d_{i-1}\big],
	\end{align}
	where  $d_k$ is the determinant of the top-left $k\times k$ submatrix. Then $d_n$ is the determinant value.
	In our case, $D_{1,n}(t)$ is the determinant of an upper Hessenberg matrix with the entries $a_{i,k}$ having a negative sign when $k>i$ and $k-i $ is odd. Moreover, $a_{j+1,j}=1$ for $j \geq 1$. So in this case, \eqref{e5.223} gives
	\begin{align*}
		d_0=1,~ d_k=a_{k,k}d_{k-1}+\sum_{i=1}^{k-1}\big[a_{i,k}d_{i-1}\big].
	\end{align*} 
	Therefore the value of the determinant $D_{1,n+1}(t)$ of order $n \times n$ is given by
	\begin{align*}
		D_{1,n+1}(t) = \sum_{l=1}^{n}\dfrac{(t+l-1)(t+l) \cdots(t+n-1) }{(n-l+1)!q_{l-1}}D_{1,l}(t).  
	\end{align*}
	By the induction hypothesis, we have
	\begin{align*}
		&D_{1,n+1}(t)\nonumber\\
		&=\dfrac{t(t+1) \cdots (t+n-1)}{n!q_0}+\dfrac{(t+1) \cdots (t+n-1)}{(n-1)!q_1}\cdot \dfrac{t}{q_0}\nonumber \\
		& \ \ \ +\sum_{l=3}^{n}\dfrac{(t+l-1)(t+l) \cdots(t+n-1) }{(n-l+1)!q_{l-1}}\cdot t(t+1) \cdots (t+l-2) \Bigg[\prod_{i=0}^{l-2}\dfrac{1}{ q_i}\nonumber \\
		& \ \ +\sum_{i=2}^{l-1}\sum_{\substack{(i_1, i_2, \dots , i_j) \in P(i), j \geq 1}} \dfrac{1}{i_1!i_2! \dots i_j!}\sum_{0 \leq k_j \leq k_{j-1} \leq \dots \leq k_1 \leq l-i-1}\prod\limits_{u=0}^{k_j}\dfrac{1}{q_u}\prod\limits_{r=1}^{j-1}\prod_{s=k_{j-r+1}}^{k_{j-r}}\dfrac{1}{q_{(T_r+s)}}\prod\limits_{v=i+k_1}^{l-2}\dfrac{1}{q_v} \Bigg]\\
		&= t(t+1) \cdots (t+n-1)\Bigg[\dfrac{1}{n!q_0}+\sum_{l=2}^{n}\dfrac{1}{(n-l+1)!q_{l-1}}\prod_{i=0}^{l-2}\dfrac{1}{ q_i}+\sum_{l=3}^{n}\dfrac{1}{(n-l+1)!q_{l-1}}\times  \nonumber \\
		& \ \ \sum_{i=2}^{l-1}\sum_{\substack{(i_1, i_2, \dots , i_j) \in P(i), j \geq 1}} \dfrac{1}{i_1!i_2! \dots i_j!}\sum_{0 \leq k_j \leq k_{j-1} \leq \dots \leq k_1 \leq l-i-1}\prod\limits_{u=0}^{k_j}\dfrac{1}{q_u}\prod\limits_{r=1}^{j-1}\prod_{s=k_{j-r+1}}^{k_{j-r}}\dfrac{1}{q_{(T_r+s)}}\prod\limits_{v=i+k_1}^{l-2}\dfrac{1}{q_v} \Bigg].\\
	\end{align*}
	For fixed positive integers $i, a$ and $n$, let us define
	\begin{align*}
		&\mathfrak{U}_{i,a}^{n}(t) =\sum_{\substack{(i_1, i_2, \dots , i_j) \in P(i),\\ a\leq j \leq [\frac{i}{2}]}} \dfrac{1}{i_1!i_2! \dots i_j!}\sum_{0 \leq k_j \leq k_{j-1} \leq \dots \leq k_1 \leq n-i-1}\prod\limits_{u=0}^{k_j}\dfrac{1}{q_u}\prod\limits_{r=1}^{j-1}\prod_{s=k_{j-r+1}}^{k_{j-r}}\dfrac{1}{q_{(T_r+s)}}\prod\limits_{v=i+k_1}^{n-2}\dfrac{1}{q_v}.  
	\end{align*}
	Using this notation, we have
	\begin{align*}
		\mathfrak{U}_{i,1}^{n}(t)= \frac{1}{i!}\sum_{0  \leq k_1 \leq n-i-1}\prod\limits_{u=0}^{k_1}\dfrac{1}{q_u}\prod\limits_{v=i+k_1}^{n-2}\dfrac{1}{q_v}+\mathfrak{U}_{i,2}^{n}(t).
	\end{align*}
	Also
	\begin{align*}
		\sum_{i=2}^{n-1}\mathfrak{U}_{i,1}^{n}(t)=\sum_{i=2}^{n-1} \frac{1}{i!}\sum_{0  \leq k_1 \leq n-i-1}\prod\limits_{u=0}^{k_1}\dfrac{1}{q_u}\prod\limits_{v=i+k_1}^{n-2}\dfrac{1}{q_v}+\sum_{i=4}^{n-1}\mathfrak{U}_{i,2}^{n}(t).
	\end{align*}
	Then 
	\begin{align*}
		&D_{1,n+1}(t)\\
		&=t(t+1) \cdots (t+n-1)\Bigg[\sum_{l=1}^{n}\dfrac{1}{(n-l+1)!}\prod_{i=0}^{l-1}\dfrac{1}{ q_i}+\sum_{i=2}^{n-1}\frac{1}{q_{n-1}}\mathfrak{U}_{i,1}^{n}(t) + \sum_{l=3}^{n-1}\dfrac{1 }{(n-l+1)!q_{l-1}}\sum_{i=2}^{l-1}\mathfrak{U}_{i,1}^{l}(t)\Bigg]\\
		&=t(t+1) \cdots (t+n-1)\Bigg[\prod_{i=0}^{n-1}\dfrac{1}{ q_i}+\sum_{l=1}^{n-1}\dfrac{1}{(n-l+1)!}\prod_{i=0}^{l-1}\dfrac{1}{ q_i}+\sum_{i=2}^{n-1}\frac{1}{i!}\sum_{0  \leq k_1 \leq n-i-1}\prod\limits_{u=0}^{k_1}\dfrac{1}{q_u}\prod\limits_{v=i+k_1}^{n-1}\dfrac{1}{q_v}\\
		& \ \ +\sum_{i=4}^{n-1}\frac{1}{q_{n-1}}\mathfrak{U}_{i,2}^{n}(t)+\sum_{l=3}^{n-1}\dfrac{1 }{(n-l+1)!q_{l-1}}\sum_{i=2}^{l-1}\mathfrak{U}_{i,1}^{l}(t)\Bigg].
	\end{align*}
	Combining the second and third terms except for $l=1$, the R. H. S. of the above equation is equal to
	\begin{align}
		\label{eq5.301}
		&t(t+1) \cdots (t+n-1)\Bigg[ \prod_{i=0}^{n-1}\dfrac{1}{ q_i}+ \dfrac{1}{n!q_0}+\sum_{i=2}^{n-1}\frac{1}{i!}\sum_{0  \leq k_1 \leq n-i}\prod\limits_{u=0}^{k_1}\dfrac{1}{q_u}\prod\limits_{v=i+k_1}^{n-1}\dfrac{1}{q_v} +\sum_{i=4}^{n-1}\frac{1}{q_{n-1}}\mathfrak{U}_{i,2}^{n}(t)\nonumber\\
		& \ \ +\sum_{l=3}^{n-1}\dfrac{1 }{(n-l+1)!q_{l-1}}\sum_{i=2}^{l-1}\mathfrak{U}_{i,1}^{l}(t)\Bigg] \nonumber\\
		&=t(t+1) \cdots (t+n-1)\Bigg[ \prod_{i=0}^{n-1}\dfrac{1}{ q_i}+ \sum_{i=2}^{n}\frac{1}{i!}\sum_{0  \leq k_1 \leq n-i}\prod\limits_{u=0}^{k_1}\dfrac{1}{q_u}\prod\limits_{v=i+k_1}^{n-1}\dfrac{1}{q_v} +\sum_{i=4}^{n-1}\frac{1}{q_{n-1}}\mathfrak{U}_{i,2}^{n}(t)\nonumber\\
		& \ \ +\sum_{l=3}^{n-1}\dfrac{1 }{(n-l+1)!q_{l-1}}\sum_{i=2}^{l-1}\mathfrak{U}_{i,1}^{l}(t)\Bigg].
	\end{align}
	Now 
	\begin{align}
		\label{eq5.311}
		&\sum_{l=3}^{n-1}\dfrac{1 }{(n-l+1)!q_{l-1}}\sum_{i=2}^{l-1}\mathfrak{U}_{i,1}^{l}(t)\nonumber \\
		&=\dfrac{1 }{(n-2)!q_{2}}\sum_{i=2}^{2}\mathfrak{U}_{i,1}^{3}(t) + \dfrac{1 }{(n-3)!q_{3}}\sum_{i=2}^{3}\mathfrak{U}_{i,1}^{4}(t)+ \dots + \dfrac{1 }{2!q_{n-2}}\sum_{i=2}^{n-2}\mathfrak{U}_{i,1}^{n-1}(t) \nonumber\\
		&=\frac{1}{2!q_{n-2}}\mathfrak{U}_{2,1}^{n-1}(t)+\bigg\{\frac{1}{2!q_{n-2}}\mathfrak{U}_{3,1}^{n-1}(t)+\frac{1}{3!q_{n-3}}\mathfrak{U}_{2,1}^{n-2}(t)\bigg\}+\bigg\{\frac{1}{2!q_{n-2}}\mathfrak{U}_{4,1}^{n-1}(t)+\frac{1}{3!q_{n-3}}\mathfrak{U}_{3,1}^{n-2}(t)+\nonumber \\
		& \ \ \ \frac{1}{4!q_{n-4}}\mathfrak{U}_{2,1}^{n-3}(t)\bigg\} + \dots +\bigg\{\frac{1}{2!q_{n-2}}\mathfrak{U}_{n-2,1}^{n-1}(t)+\frac{1}{3!q_{n-3}}\mathfrak{U}_{n-3,1}^{n-2}(t)+\dots + \frac{1}{(n-2)!q_{2}}\mathfrak{U}_{2,1}^{3}(t)\bigg\}\nonumber \\
		&=\frac{1}{2!q_{n-2}}\mathfrak{U}_{2,1}^{n-1}(t)+ \sum_{\alpha=2}^{3}\frac{1}{\alpha!q_{n-\alpha}}\mathfrak{U}_{5-\alpha,1}^{n-\alpha+1}(t)+\sum_{\alpha=2}^{4}\frac{1}{\alpha!q_{n-\alpha}}\mathfrak{U}_{6-\alpha,1}^{n-\alpha+1}(t)+ \dots +\sum_{\alpha=2}^{n-2}\frac{1}{\alpha!q_{n-\alpha}}\mathfrak{U}_{n-\alpha,1}^{n-\alpha+1}(t)\nonumber \\
		&=\sum_{i=4}^{n}\sum_{\alpha=2}^{i-2}\frac{1}{\alpha!q_{n-\alpha}}\mathfrak{U}_{i-\alpha,1}^{n-\alpha+1}(t).
	\end{align}
	Using \eqref{eq5.311} in \eqref{eq5.301}, we get
	\begin{align}
		\label{e5.332}
		&D_{1,n+1}(t)\nonumber \\
		&=t(t+1) \cdots (t+n-1)\Bigg[ \prod_{i=0}^{n-1}\dfrac{1}{ q_i}+ \sum_{i=2}^{n}\frac{1}{i!}\sum_{0  \leq k_1 \leq n-i}\prod\limits_{u=0}^{k_1}\dfrac{1}{q_u}\prod\limits_{v=i+k_1}^{n-1}\dfrac{1}{q_v} +\sum_{i=4}^{n-1}\frac{1}{q_{n-1}}\mathfrak{U}_{i,2}^{n}(t)\nonumber\\
		& \ \ +\sum_{i=4}^{n}\sum_{\alpha=2}^{i-2}\frac{1}{\alpha!}\mathfrak{U}_{i-\alpha,1}^{n-\alpha+1}(t)\Bigg].
	\end{align}
	Next we will show that
	\begin{align}
		\label{e6.331}
		\sum_{i=4}^{n-1}\frac{1}{q_{n-1}}\mathfrak{U}_{i,2}^{n}(t)+\sum_{i=4}^{n}\sum_{\alpha=2}^{i-2}\frac{1}{\alpha!}\mathfrak{U}_{i-\alpha,1}^{n-\alpha+1}(t)  = \sum_{i=4}^{n}\mathfrak{U}_{i,2}^{n+1}(t).
	\end{align}
	We have
	\begin{align*}
		\mathfrak{U}_{i,2}^{n+1}(t)&=\sum_{\substack{(i_1, i_2, \dots , i_j) \in P(i),\\ 2\leq j \leq [\frac{i}{2}]}} \dfrac{1}{i_1!i_2! \dots i_j!}\sum_{0 \leq k_j \leq k_{j-1} \leq \dots \leq k_1 \leq n-i}\prod\limits_{u=0}^{k_j}\dfrac{1}{q_u}\prod\limits_{r=1}^{j-1}\prod_{s=k_{j-r+1}}^{k_{j-r}}\dfrac{1}{q_{(T_r+s)}}\prod\limits_{v=i+k_1}^{n-1}\dfrac{1}{q_v}\nonumber \\
		&=\sum_{\substack{(i_1, i_2, \dots , i_j) \in P(i),\\ 2\leq j \leq [\frac{i}{2}]}} \dfrac{1}{i_1!i_2! \dots i_j!}\sum_{0 \leq k_j \leq k_{j-1} \leq \dots \leq k_1 \leq n-i-1}\prod\limits_{u=0}^{k_j}\dfrac{1}{q_u}\prod\limits_{r=1}^{j-1}\prod_{s=k_{j-r+1}}^{k_{j-r}}\dfrac{1}{q_{(T_r+s)}}\prod\limits_{v=i+k_1}^{n-1}\dfrac{1}{q_v}\nonumber \\
		& \ \ +\sum_{\substack{(i_1, i_2, \dots , i_j) \in P(i),\\ 2\leq j \leq [\frac{i}{2}]}} \dfrac{1}{i_1!i_2! \dots i_j!}\sum_{0 \leq k_j \leq k_{j-1} \leq \dots \leq k_2 \leq n-i}\prod\limits_{u=0}^{k_j}\dfrac{1}{q_u}\prod\limits_{r=1}^{j-2}\prod_{s=k_{j-r+1}}^{k_{j-r}}\dfrac{1}{q_{(T_r+s)}}\prod_{s=k_2}^{n-i}\dfrac{1}{q_{(T_{j-1}+s)}}. 
	\end{align*}
	Taking the summation over $i$ from 4 to $n$ on both sides of the above equation, we get
	\begin{align*}
		\sum_{i=4}^{n}\mathfrak{U}_{i,2}^{n+1}(t)
		&=\sum_{i=4}^{n-1}\frac{1}{q_{n-1}}\mathfrak{U}_{i,2}^{n}(t)\nonumber \\ & \ \ +\sum_{i=4}^{n}\sum_{\substack{(i_1, i_2, \dots , i_j) \in P(i),\\ 2\leq j \leq [\frac{i}{2}]}} \dfrac{1}{i_1!i_2! \dots i_j!}\sum_{0 \leq k_j \leq k_{j-1} \leq \dots \leq k_2 \leq n-i}\prod\limits_{u=0}^{k_j}\dfrac{1}{q_u}\prod\limits_{r=1}^{j-2}\prod_{s=k_{j-r+1}}^{k_{j-r}}\dfrac{1}{q_{(T_r+s)}}\prod_{s=k_2}^{n-i}\dfrac{1}{q_{(T_{j-1}+s)}}.
	\end{align*}
	To prove \eqref{e6.331}, we claim that
	\begin{align*}
		&\sum_{\alpha=2}^{i-2}\frac{1}{\alpha!q_{n-\alpha}}\mathfrak{U}_{i-\alpha,1}^{n-\alpha+1}(t) \\
		&= \sum_{\substack{(i_1, i_2, \dots , i_j) \in P(i),\\ 2\leq j \leq [\frac{i}{2}]}} \dfrac{1}{i_1!i_2! \dots i_j!}\sum_{0 \leq k_j \leq k_{j-1} \leq \dots \leq k_2 \leq n-i}\prod\limits_{u=0}^{k_j}\dfrac{1}{q_u}\prod\limits_{r=1}^{j-2}\prod_{s=k_{j-r+1}}^{k_{j-r}}\dfrac{1}{q_{(T_r+s)}}\prod_{s=k_2}^{n-i}\dfrac{1}{q_{(T_{j-1}+s)}}.
	\end{align*}
	We can see
	\begin{align}
		\label{eq6.341}
		&\sum_{\alpha=2}^{i-2}\frac{1}{\alpha!q_{n-\alpha}}\mathfrak{U}_{i-\alpha,1}^{n-\alpha+1}(t)\nonumber \\
		&=\sum_{\alpha=2}^{i-2}\frac{1}{\alpha!}\sum_{\substack{(i_1, i_2, \dots , i_j) \in P(i-\alpha),\\ 1\leq j \leq [\frac{i-\alpha}{2}]}} \dfrac{1}{i_1!i_2! \dots i_j!}\sum_{0 \leq k_j \leq k_{j-1} \leq \dots \leq k_1 \leq n-i}\prod\limits_{u=0}^{k_j}\dfrac{1}{q_u}\prod\limits_{r=1}^{j-1}\prod_{s=k_{j-r+1}}^{k_{j-r}}\dfrac{1}{q_{(T_r+s)}}\prod_{v=i-\alpha+k_1}^{n-\alpha}\dfrac{1}{q_{v}}\nonumber \\
		&=\sum_{j=1}^{[\frac{i}{2}]-1}\sum_{\alpha=2}^{i-2j}\frac{1}{\alpha!}\sum_{\substack{(i_1, i_2, \dots , i_j) \in P(i-\alpha)}} \dfrac{1}{i_1!i_2! \dots i_j!}\sum_{0 \leq k_j \leq k_{j-1} \leq \dots \leq k_1 \leq n-i}\prod\limits_{u=0}^{k_j}\dfrac{1}{q_u}\prod\limits_{r=1}^{j-1}\prod_{s=k_{j-r+1}}^{k_{j-r}}\dfrac{1}{q_{(T_r+s)}}\prod_{v=i-\alpha+k_1}^{n-\alpha}\dfrac{1}{q_{v}} 
	\end{align}
	and 
	\begin{align*}
		& \sum_{\substack{(i_1, i_2, \dots , i_j) \in P(i),\\ 2\leq j \leq [\frac{i}{2}]}} \dfrac{1}{i_1!i_2! \dots i_j!}\sum_{0 \leq k_j \leq k_{j-1} \leq \dots \leq k_2 \leq n-i}\prod\limits_{u=0}^{k_j}\dfrac{1}{q_u}\prod\limits_{r=1}^{j-2}\prod_{s=k_{j-r+1}}^{k_{j-r}}\dfrac{1}{q_{(T_r+s)}}\prod_{s=k_2}^{n-i}\dfrac{1}{q_{(T_{j-1}+s)}}\nonumber \\
		&=\sum_{j=1}^{[\frac{i}{2}]-1} \sum_{\substack{(i_1, i_2, \dots , i_{j+1}) \in P(i)}} \dfrac{1}{i_1!i_2! \dots i_{j+1}!}\sum_{0 \leq k_{j+1} \leq k_{j} \leq \dots \leq k_2 \leq n-i}\prod\limits_{u=0}^{k_{j+1}}\dfrac{1}{q_u}\prod\limits_{r=1}^{j-1}\prod_{s=k_{j-r+2}}^{k_{j-r+1}}\dfrac{1}{q_{(T_r+s)}}\prod_{s=k_2}^{n-i}\dfrac{1}{q_{(T_{j}+s)}}.
	\end{align*}
	Let $i_{j+1}=\alpha$ in the above expression. Depending on $i$ and $ j$, $\alpha$ can take the values 2, 3, \dots , $i-2j$. Therefore putting these values of $\alpha$ in the above equation, we get
	\begin{align*}
		& \sum_{\substack{(i_1, i_2, \dots , i_j) \in P(i),\\ 2\leq j \leq [\frac{i}{2}]}} \dfrac{1}{i_1!i_2! \dots i_j!}\sum_{0 \leq k_j \leq k_{j-1} \leq \dots \leq k_2 \leq n-i}\prod\limits_{u=0}^{k_j}\dfrac{1}{q_u}\prod\limits_{r=1}^{j-2}\prod_{s=k_{j-r+1}}^{k_{j-r}}\dfrac{1}{q_{(T_r+s)}}\prod_{s=k_2}^{n-i}\dfrac{1}{q_{(T_{j-1}+s)}}\nonumber \\
		&=\sum_{j=1}^{[\frac{i}{2}]-1}\sum_{\alpha=2}^{i-2j}\frac{1}{\alpha!}\sum_{\substack{(i_1, i_2, \dots , i_j) \in P(i-\alpha)}} \dfrac{1}{i_1!i_2! \dots i_j!}\sum_{0 \leq k_{j+1} \leq k_{j} \leq \dots \leq k_2 \leq n-i}\prod\limits_{u=0}^{k_{j+1}}\dfrac{1}{q_u}\prod\limits_{r=1}^{j-1}\prod_{s=k_{j-r+2}}^{k_{j-r+1}}\dfrac{1}{q_{(T_r+s)}}\prod_{s=k_2}^{n-i}\dfrac{1}{q_{(i-\alpha+s)}}\nonumber\\
		&=\sum_{j=1}^{[\frac{i}{2}]-1}\sum_{\alpha=2}^{i-2j}\frac{1}{\alpha!}\sum_{\substack{(i_1, i_2, \dots , i_j) \in P(i-\alpha)}} \dfrac{1}{i_1!i_2! \dots i_j!}\sum_{0 \leq k_{j+1} \leq k_{j} \leq \dots \leq k_2 \leq n-i}\prod\limits_{u=0}^{k_{j+1}}\dfrac{1}{q_u}\prod\limits_{r=1}^{j-1}\prod_{s=k_{j-r+2}}^{k_{j-r+1}}\dfrac{1}{q_{(T_r+s)}}\prod_{s=i-\alpha+k_2}^{n-\alpha}\dfrac{1}{q_{s}},
	\end{align*}
	which is same as \eqref{eq6.341}. Therefore our claim is true and hence \eqref{e6.331} follows. Using \eqref{e6.331} in \eqref{e5.332} we get
	\begin{align*}
		&D_{1,n+1}(t)\nonumber \\
		&=t(t+1) \cdots (t+n-1)\Bigg[ \prod_{i=0}^{n-1}\dfrac{1}{ q_i}+ \sum_{i=2}^{n}\frac{1}{i!}\sum_{0  \leq k_1 \leq n-i}\prod\limits_{u=0}^{k_1}\dfrac{1}{q_u}\prod\limits_{v=i+k_1}^{n-1}\dfrac{1}{q_v} +\sum_{i=4}^{n}\mathfrak{U}_{i,2}^{n+1}(t)\Bigg] \nonumber \\
		&=t(t+1) \cdots (t+n-1)\Bigg[ \prod_{i=0}^{n-1}\dfrac{1}{ q_i}+ \sum_{i=2}^{n}\mathfrak{U}_{i,1}^{n+1}(t)\Bigg].
	\end{align*}
	Thus the result is true for the determinant $D_{1,n+1}(t)$. Hence by mathematical induction, the proposition is proved.
\end{proof}

Therefore by Proposition \ref{p5.4}, the inverse of $M(t)$ is given by
\begin{align*}
	M(t)^{-1}	=
	\begin{pmatrix}
		R_{1,1}(t) & 	R_{1,2}(t) & 	R_{1,3}(t) & 	R_{1,4}(t) &  \dots \\
		0 & 	R_{1,1}(t+1) &	R_{1,2}(t+1) & 	R_{1,3}(t+1) &  \dots \\
		0 & 0 & 	R_{1,1}(t+2) & 	R_{1,2}(t+2) & \dots \\
		0 & 0 & 0 &	R_{1,1}(t+3)  & \dots \\
		\vdots & \vdots & \vdots & \vdots& \ddots
	\end{pmatrix},
\end{align*}
where $R_{1,1}(t)=1, R_{1,2}(t)=-\dfrac{t}{q_0}, R_{1,3}(t)=\dfrac{t(t+1)}{q_0(t)}\bigg\{\dfrac{1}{q_1(t)}+\dfrac{1}{2!}\bigg\} ,\\ $ ~$R_{1,4}(t)=\dfrac{t(t+1)(t+2)}{q_0(t)}\bigg\{\dfrac{1}{q_1(t)q_2(t)}+\dfrac{1}{2!}\Big(\dfrac{1}{q_1(t)}+\dfrac{1}{q_2(t)}\Big)+\dfrac{1}{3!}\bigg\}$ and so on.\\
In general, for $n \geq 2$,
\begin{align*}
	R_{1,n}(t)={(-1)}^{n-1}t(t+1)\dots (t+n-2)\mathcal{L}_n(t),
\end{align*} 
where 
$\mathcal{L}_n(t)$ is given by \eqref{eq22}. Consider $\mathcal{L}_1(t)=1$.
Since $M(t)^{-1}$ and $N(t)$ are both upper triangular matrices, each entry in their product contains only a finite sum. Hence the product $M(t)^{-1}N(t)$ is well defined and is again an infinite upper triangular matrix.
Therefore
\begin{align*}
	&M(t)^{-1}N(t)= \begin{pmatrix}
		1 & 	-t \mathcal{L}_2(t)& 	t(t+1) \mathcal{L}_3(t) & -t(t+1)(t+2) \mathcal{L}_4(t) &  \dots \\
		0 & 	1 &	-(t+1) \mathcal{L}_2(t+1) & 	-(t+1)(t+2) \mathcal{L}_3(t+1) &  \dots \\
		0 & 0 & 	1 & 	-(t+2)\mathcal{L}_2(t+2) & \dots \\
		0 & 0 & 0 &	1  & \dots \\
		\vdots & \vdots & \vdots & \vdots  & \ddots
	\end{pmatrix}\nonumber\\
	& \hspace{2cm}	\cdot\begin{pmatrix}
		\dfrac{1}{q_0(t)} & -\dfrac{t}{q_0(t)} & \dfrac{t(t+1)}{2!q_0(t)} & -\dfrac{t(t+1)(t+2)}{3!q_0(t)}  & \dots \\
		0 & \dfrac{1}{q_1(t)} &-\dfrac{t+1}{q_1(t)} & \dfrac{(t+1)(t+2)}{2!q_1(t)}  & \dots \\
		0 & 0 & \dfrac{1}{q_2(t)} & -\dfrac{t+2}{q_2(t)} &  \dots \\
		0 & 0 & 0 & \dfrac{1}{q_3(t)} & \dots\\
		\vdots & \vdots & \vdots & \vdots  & \ddots
	\end{pmatrix} \nonumber\\
	&=\begin{pmatrix}
		H_{1,1}(t) & 	H_{1,2}(t) & 	H_{1,3}(t) & 	H_{1,4}(t) &  \dots \\
		0 & 	H_{1,1}(t+1) &	H_{1,2}(t+1) & 	H_{1,3}(t+1) &  \dots \\
		0 & 0 & 	H_{1,1}(t+2) & 	H_{1,2}(t+2) & \dots \\
		0 & 0 & 0 &	H_{1,1}(t+3)  & \dots \\
		\vdots & \vdots & \vdots & \vdots & \ddots
	\end{pmatrix}=H(t),
\end{align*}
where $H_{1,1}(t)=\dfrac{1}{q_0(t)}$ and for $n \geq 2$
\begin{align}
	\label{eq27}
	H_{1,n}(t)= {(-1)}^{n-1}t(t+1)\dots (t+n-2)\sum_{i=1}^{n}\dfrac{\mathcal{L}_i(t)}{(n-i)!q_{i-1}(t)}.
\end{align}
If we multiply both sides of \eqref{eq19} by $M(s_1)^{-1}$ then L.H.S. of \eqref{eq19} gives $M(s_1)^{-1}N(s_1)V(s_1+s_2, s_3, \dots, s_r)$ and comparing the first argument   one can express the $q$-multiple zeta function of depth $r$ in terms of translates of that of depth $r-1$. But the product $H(s_1)V(s_1+s_2, s_3, \dots, s_r)$ is not well defined, because the formal product of the matrix $H(s_1)$ with the column vector $V(s_1+s_2, s_3, \dots, s_r)$ produces entries which are not convergent series.  
To overcome this difficulty, we fix  an integer $K \geq 1$ and consider the sets
$$I=I_K=\{l \in \mathbb{Z}: 0 \leq l \leq K-1\}, J=J_K= \{l \in \mathbb{Z}: l \geq K\}.$$
Using these index sets, we can write infinite matrices in block form, such as
$$M(s_1)=
\begin{pmatrix}
	M_{II}(s_1) & M_{IJ}(s_1)\\
	0_{JI} & M_{JJ}(s_1)
\end{pmatrix}, ~~
N(s_1)=
\begin{pmatrix}
	N_{II}(s_1) & N_{IJ}(s_1)\\
	0_{JI} & N_{JJ}(s_1)
\end{pmatrix}.$$
From \eqref{eq19}, we can write
\begin{align}
	\label{equation}
	& \hspace{1cm}M_{II}(s_1)V_I(s_1, s_2, \dots, s_r)+M_{IJ}(s_1)V_J(s_1, s_2, \dots s_r)\nonumber \\
	&\hspace{.8cm}=N_{II}(s_1)V_I(s_1+s_2, s_3, \dots, s_r)+N_{IJ}(s_1)V_J(s_1+s_2, s_3, \dots, s_r).
\end{align}
Now $M_{II}(s_1)^{-1}$ is a finite invertible matrix and we have
$$M_{II}(s_1)^{-1}N_{II}(s_1)=H_{II}(s_1).$$
Therefore from \eqref{equation}, we get
\begin{align}
	\label{equ}
	&V_I(s_1, s_2, \dots, s_r)=M_{II}(s_1)^{-1}N_{II}(s_1)V_I(s_1+s_2, s_3, \dots, s_r)+ \nonumber \\
	& \hspace{3cm}M_{II}(s_1)^{-1}N_{IJ}(s_1)V_J(s_1+s_2, s_3, \dots, s_r)-M_{II}(s_1)^{-1}M_{IJ}(s_1)V_J(s_1, s_2,  \dots, s_r).   
\end{align}
As we can see, the first term on the R.H.S of \eqref{equ} is a well defined finite matrix and all the series of meromorphic functions involved in the products of matrices in the second and third terms are normally convergent on all compact subsets of $\mathbb{C}^r$.  
Let us consider
\begin{align*}
	&\hspace{1.1cm}W'_{I}(s_1, s_2,  \dots , s_r)=M_{II}(s_1)^{-1}N_{IJ}(s_1)V_J(s_1+s_2, s_3, \dots, s_r),
\end{align*}
\begin{align*}
	&W_{I}(s_1, s_2, \dots , s_r)= -M_{II}(s_1)^{-1}M_{IJ}(s_1)V_J(s_1, s_2,  \dots, s_r).
\end{align*}
Then \eqref{equ} can be written as
\begin{align}
	\label{eq28}	  V_I(s_1, s_2, \dots, s_r)=H_{II}(s_1)V_I(s_1+s_2, s_3, \dots, s_r)+W'_{I}(s_1, s_2,  \dots , s_r)+W_{I}(s_1, s_2, \dots , s_r).
\end{align} 
Consider $$U_r(K)=\{(s_1, s_2, \dots , s_r) \in \mathbb{C}^r :\text{Re}  (s_1+s_2 +\dots +s_j) >-K, 1 \leq j \leq r \}.$$ 
Since $\text{Re}(s_1+s_2 +\dots +s_j) >-K \implies \text{Re}(s_1+K+s_2+s_3+ \dots +s_j)>0$,  all the entries of the column vector $V_J(s_1, s_2, \dots , s_r)$ are holomorphic on $U_r(K)$.

Now $M_{II}(t)^{-1}N_{IJ}(t)={\Big(R^*_{m,n}(t)\Big)}_{m\leq K, n \in \mathbb{N}}$ and ~~$M_{II}(t)^{-1}M_{IJ}(t)={\Big(R'_{m,n}(t)\Big)}_{m\leq K, n \in \mathbb{N}}$, where
\begin{align*}
	&R^*_{m,n}(t)=R'_{m,n}(t)={(-1)}^{(K+n-m)}(t+m-1)(t+m) \dots (t+K+n-2) \times \nonumber \\
	& \hspace{3.5cm}\sum_{j=1}^{K-m+1}\dfrac{\mathcal{L}_j(t+m-1)}{(K+n-m-j+1)!q_{(m-2+j)}(t)}.
\end{align*}
Therefore   
\begin{align*}
	&R^*_{1,n}(t)=R'_{1,n}(t)\nonumber \\
	&={(-1)}^{(K+n-1)}t(t+1) \dots (t+K+n-2)\bigg(\dfrac{\mathcal{L}_1(t)}{(K+n-1)!q_{0}(t)}+\dfrac{\mathcal{L}_2(t)}{(K+n-2)!q_{1}(t)}+ \dots + \dfrac{\mathcal{L}_{K}(t)}{n!q_{K-1}(t)}\bigg).
\end{align*}

Let  $\xi_K(s_1, s_2, \dots , s_r)$ be the first entry of the column vector $W_{I}(s_1, s_2, \dots , s_r)$ and $\eta_K(s_1, s_2,  \dots , s_r)$ be that of the column vector $W'_{I}(s_1, s_2,  \dots , s_r)$.
Then from \eqref{eq28}, we get 
\begin{align}
	\label{6.6666}
	\z_q(s_1, s_2, \dots , s_r)&=\sum_{i=1}^{K}H_{1,i}(s_1)\z_q(s_1+ s_2+i-1, s_3, \dots , s_r)\nonumber\\
	& + \xi_K(s_1, s_2, \dots , s_r)+\eta_K(s_1,s_2, \dots , s_r),
\end{align}

where $H_{1,n}(s_1)$ is given by \eqref{eq27} and
\begin{align*}
	&	\xi_K(s_1, s_2, \dots , s_r)=\sum_{n=1}^{\infty}R'_{1,n}(s_1)\z_q(s_1+K+n-1, s_2, \dots , s_r), \\
	&  	\eta_K(s_1, s_2, \dots , s_r)=\sum_{n=1}^{\infty}R^*_{1,n}(s_1)\z_q(s_1+s_2+K+n-1, s_3, \dots , s_r).
\end{align*}
\textbf{Note:} After this point, we will adopt all the notations defined in this section wherever necessary.
\section{Poles and Residues}
Here we locate the poles and corresponding residues of the $q$-multiple zeta function. For $1 \leq j \leq r$ and $k \geq 0$, consider the hyperplane $\mathcal{H}_{j, k}$ of $\mathbb{C}^r$ defined by the equation $ s_1+ \dots + s_j=-k$. 

\begin{theorem}
	\label{th3}
	All the poles of $q$-multiple zeta function of depth $r$ are simple and given by the set $\Big\{(s_1, s_2, \dots , s_r) \in \mathbb{C}^r: s_1+s_2 +\dots +s_j \in \mathbb{Z}_{\leq 0}+\dfrac{2\pi i}{\text{log } q}\mathbb{Z}, 1 \leq j \leq r, i=\sqrt{-1}\Big\}.$
\end{theorem}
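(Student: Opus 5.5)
Set $\mathcal{P}_j=\{s_1+\dots+s_j\in\mathbb{Z}_{\le 0}+\frac{2\pi i}{\log q}\mathbb{Z}\}$. The statement should be read as saying that the polar divisor of $\zeta_q(s_1,\dots,s_r)$ is contained in $\bigcup_{j=1}^r\mathcal{P}_j$ and that every pole is simple. The argument is an induction on the depth $r$, with formula \eqref{6.6666} as the engine.

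\emph{Base case $r=1$.} Use the known continuation of $\zeta_q(s)$ (Kaneko et al.\ \cite{2}), or read it off from the translation formula for the $q$-Riemann zeta function obtained from \eqref{5.8}:
\[
(q^{-s}-1)\zeta_q(s)=1-\sum_{k\ge0}(-1)^k\frac{s(s+1)\cdots(s+k)}{(k+1)!}\zeta_q(s+k+1).
\]
On $\text{Re}(s)>-N$ the right-hand side is meromorphic with poles only where some $\zeta_q(s+k+1)$ has one. Dividing by $q^{-s}-1$ introduces at most simple poles at $s\in\frac{2\pi i}{\log q}\mathbb{Z}$. Inducting on $N$, the poles lie in $\mathbb{Z}_{\le0}+\frac{2\pi i}{\log q}\mathbb{Z}$.

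Simplicity needs more care. At $s=-m+\frac{2\pi i\ell}{\log q}$, the candidate singular term on the right is $k=m-1$. It is singular only because $\zeta_q(s+m)$ has a simple pole at $\frac{2\pi i \ell}{\log q}$. But $q^{-s}-1$ also vanishes there, which would give a double pole unless the polynomial factor $s(s+1)\cdots(s+m-1)$ kills one order. For $\ell\ne0$ this factor does not vanish, so the argument has to be redone with the explicit formula of \cite{2}: $\zeta_q(s)=\sum_{k\ge0}\binom{-s}{k}\ldots\frac{q^{s+k}}{1-q^{s+k}}$-type expansions exhibit only simple poles. I will cite that description rather than re-derive it.

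\emph{Inductive step.} Assume the claim for depth $r-1$. Fix $K$ and work on $U_r(K)$; since these sets exhaust $\mathbb{C}^r$, this suffices. In \eqref{6.6666}:

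\textbf{(i)} $\xi_K$ is holomorphic on $U_r(K)$. Every $\zeta_q(s_1+K+n-1,s_2,\dots,s_r)$ with $n\ge1$ has arguments in $U_r$, and the series converges normally. The coefficients $R'_{1,n}(s_1)$ contain the factors $1/q_{j}(s_1)$, $0\le j\le K-1$, through the $\mathcal{L}_j$. These have simple poles exactly at $s_1\in -j+\frac{2\pi i}{\log q}\mathbb{Z}$, i.e.\ on $\mathcal{P}_1$.

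\textbf{(ii)} $\eta_K$ is likewise holomorphic off $\mathcal{P}_1$. Its $\zeta_q$ factors have first argument $s_1+s_2+K+n-1$, and these are holomorphic on $U_r(K)$ by the depth $r-1$ convergence region.

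\textbf{(iii)} The finite sum $\sum_{i\le K}H_{1,i}(s_1)\zeta_q(s_1+s_2+i-1,s_3,\dots)$ has poles from two sources. The coefficients $H_{1,i}$ have poles on $\mathcal{P}_1$. By the induction hypothesis applied to $(s_1+s_2+i-1,s_3,\dots,s_r)$, the zeta factors have simple poles on the sets $s_1+\dots+s_j\in -(i-1)+\mathbb{Z}_{\le0}+\frac{2\pi i}{\log q}\mathbb{Z}$ for $j\ge2$, which are contained in $\mathcal{P}_j$. This gives the pole set.

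\emph{Simplicity, the main obstacle.} Away from intersections of distinct hyperplanes, a product of a function with a simple pole along $\mathcal{P}_1$ and one with a simple pole along $\mathcal{P}_j$, $j\ge2$, has only simple poles along each hyperplane separately. Order is measured generically along a hyperplane, so the intersections do not matter. Along $\mathcal{P}_1$ the only poles come from the factors $1/q_j(s_1)$. Each entry of $H$, $R'$, $R^*$ is a sum of products of distinct $1/q_u$'s; this is visible from \eqref{eq22}, where the indices $u$, $T_r+s$, $v$ in each product are strictly increasing. Each fixed $q_u$ therefore appears at most to the first power, and since distinct $q_u$ have disjoint zero sets, the poles along $\mathcal{P}_1$ are simple.

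Checking this distinctness in \eqref{eq22}, and in the products $\mathcal{L}_i/q_{i-1}$ in \eqref{eq27}, is the delicate combinatorial point. I would verify it by showing that the index ranges $[0,k_j]$, $[T_r+k_{j-r+1},T_r+k_{j-r}]$, $[i+k_1,n-2]$ are disjoint because $T_{r}\ge T_{r-1}+2$. One could also argue directly from the recurrence for $D_{1,n}$: the $u$-th row is the only one carrying $1/q_u$.

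Finally, to show each such hyperplane genuinely contains poles, I would compute the residue along $\mathcal{P}_1$ at a generic point. It is a nonzero multiple of a $\zeta_q$ of depth $r-1$, which is not identically zero. This nonvanishing claim is where I am least sure of details.
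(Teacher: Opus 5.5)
Your proposal is essentially the paper's proof: induction on depth through the truncated identity \eqref{6.6666} on $U_r(K)$. The coefficients $H_{1,i}$, $R'_{1,n}$, $R^*_{1,n}$ produce the hyperplanes $s_1\in\mathbb{Z}_{\le 0}+\frac{2\pi i}{\log q}\mathbb{Z}$, and the depth-$(r-1)$ factors produce those with $j\ge 2$. Your observation that each row of $D_{1,n}(t)$ carries its own factor $1/q_u$, so that every coefficient is multilinear in distinct $1/q_u$, supplies the simplicity argument that the paper only asserts.

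Two corrections. First, your depth-one worry is unfounded: at $s=-m+\frac{2\pi i\ell}{\log q}$ with $m\ge 1$ one has $q^{-s}-1=q^{m}-1\neq 0$. So the depth-one translation formula alone already gives simple poles there, and at $m=0$ its right-hand side is holomorphic. Second, just as in the paper, your argument proves only that the polar set is contained in the stated union. It does not show that every listed hyperplane, in particular those with $j\ge 2$, is genuinely polar.
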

\begin{proof}
	We prove this theorem by induction on $r$. When $r=1$, then by \cite[Proposition 1]{2}, it is true. Let $r \geq 2$ and assume that the result is true for all $q$-multiple zeta functions of depth $\leq r-1$. 
	For $j \geq 0$ and $1 \leq k \leq r$, consider $$B_{k,j}=\bigg\{(s_1, s_2, \dots , s_r) \in \mathbb{C}^r: s_1+s_2 +\dots +s_k=-j+\dfrac{2\pi i}{\text{log } q}\mathbb{Z}, i=\sqrt{-1}\bigg\}.$$ Then $B_{k, j}$ is disjoint from $U_r(K)$ when $K \leq j$, because $K \leq j$ on $U_r(K)\implies\text{Re}(s_1+s_2+ \dots +s_k)>-K>-j.$
	From \eqref{eq28}, the poles of $\z_q(s_1, s_2, \dots s_r)$ are given by the poles of the first row entry of the column matrix 
	\begin{align*}
		H_{II}(s_1)V_I(s_1+s_2, s_3, \dots, s_r)+W'_{I}(s_1,s_2,  \dots , s_r)+W_{I}(s_1, s_2, \dots , s_r).
	\end{align*} 
	The poles of the first entry of the matrix $H_{II}(s_1)$ are determined by $ q^ {-s_1}-1=0$, which gives $s_1=\dfrac{2\pi ib}{\text{log }q}, b \in \mathbb{Z}$. For the second entry of the first row of the matrix $H_{II}(s_1)$, poles are given by $ (q^ {-s_1}-1)(q^ {-(s_1+1)}-1)=0$, which implies $s_1 \in \bigg\{\dfrac{2\pi ib}{\text{log }q}, -1+ \dfrac{2\pi ib}{\text{log }q} , b \in \mathbb{Z}\bigg\}$. In this way, 
	for the first row of the matrix $H_{II}(s_1)$, we obtain  the collection of simple poles as $$ \bigg\{(s_1, s_2, \dots , s_r) \in \mathbb{C}^r: s_1=-a+\dfrac{2\pi ib}{\text{log } q} , b \in \mathbb{Z}, 0 \leq a \leq K-1\bigg\}.$$
	Therefore entries in the first row of the matrix $H_{II}(s_1)$ are analytic outside the planes given by $$\bigcup_{j=0}^{K-1}\bigg\{(s_1, s_2, \dots , s_r) \in \mathbb{C}^r: s_1=-j+\dfrac{2\pi i}{\text{log } q}\mathbb{Z}\bigg\}=\bigcup_{j=0}^{K-1}B_{1, j}.$$
	Also, each entry of the matrix $V_{I}(s_1+s_2, s_3, \dots , s_r)$ being of depth $r-1$,  all of them  are holomorphic outside the region $\bigcup\limits_{j\geq 0}\{B_{k, j}$, $2 \leq k \leq r\}$.
	Further, entries on the column vector $V_J(s_1, s_2, \dots , s_r)$ and $V_J(s_1+ s_2, s_3, \dots , s_r)$ are holomorphic on $U_r(K)$.
	So for the first row entry of both the matrices $W_{I}(s_1, s_2, \dots , s_r) \text{ and } W'_{I}(s_1, s_2, \dots , s_r)$, the poles are given by
	$$ \{(s_1, s_2, \dots , s_r) \in \mathbb{C}^r: s_1=-a+ \dfrac{2\pi i}{\text{log } q}\mathbb{Z}_{\neq 0}: 0 \leq a \leq K-1\}.$$ 
	Since $\big\{U_r(K): K \geq 1\big\}$ covers $\mathbb{C}^r$, the collection of all the simple poles of $q$-multiple zeta function of depth $r$ is given by   $$\bigg\{(s_1, s_2, \dots , s_r) \in \mathbb{C}^r: s_1+s_2 +\dots +s_j \in \mathbb{Z}_{\leq 0}+\dfrac{2\pi i}{\text{log} q}\mathbb{Z}, 1 \leq j \leq r, i=\sqrt{-1}\bigg\}.$$  
	
	
\end{proof}
\begin{theorem}
	\label{th4}
	The residue of the $q$-multiple zeta function of depth $r$ along the hyperplane $\mathcal{H}_{1,n}$, ~$n \geq 0$, is the restriction of $-n!\mathcal{L}_{n+1}(-n)\dfrac{\z_q(s_2, s_3, \dots, s_r)}{\text{log }q}$ to  $\mathcal{H}_{1,n}$, where 
	$\mathcal{L}_{n+1}(-n)$ is given by \eqref{eq22}.
\end{theorem}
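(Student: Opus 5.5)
The plan is to read the residue off the expansion \eqref{6.6666} with the cut-off $K=n+1$. Every point of $\mathcal{H}_{1,n}$ that lies on no other polar hyperplane belongs to $U_r(n+1)$. On $U_r(n+1)$ all entries of $V_J(s_1,\dots,s_r)$ and $V_J(s_1+s_2,s_3,\dots,s_r)$ are holomorphic. So along $s_1=-n$ the only possible singularities come from the rational coefficients in $s_1$: the entries $H_{1,i}(s_1)$, $R'_{1,m}(s_1)$ and $R^*_{1,m}(s_1)$.

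These coefficients contain only the factors $1/q_j(s_1)$ with $0\le j\le n$, where $q_j(s_1)=q^{-(s_1+j)}-1$. For $j<n$ we have $q_j(-n)=q^{\,n-j}-1\neq 0$, so only $1/q_n(s_1)$ can be singular on $\mathcal{H}_{1,n}$. Near $s_1=-n$,
\[
q_n(s_1)=-(s_1+n)\log q+O\big((s_1+n)^2\big),
\]
so $1/q_n$ has a simple pole with residue $-1/\log q$. Everything else in the expansion stays finite there; in particular $\mathcal{L}_{n+1}(-n)$ involves only $q_0,\dots,q_{n-1}$ at $s_1=-n$ and is therefore finite.

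The first step is the finite sum $\sum_{i=1}^{n+1}H_{1,i}(s_1)\z_q(s_1+s_2+i-1,s_3,\dots,s_r)$. By \eqref{eq27} and Proposition~\ref{p5.4}, $\mathcal{L}_i$ involves only $q_0,\dots,q_{i-2}$. Hence $1/q_n$ appears only in $H_{1,n+1}(s_1)$, and only through its summand $i=n+1$, which is $\mathcal{L}_{n+1}(s_1)/q_n(s_1)$. That term is
\[
(-1)^n s_1(s_1+1)\cdots(s_1+n-1)\,\frac{\mathcal{L}_{n+1}(s_1)}{q_n(s_1)}\,\z_q(s_1+s_2+n,s_3,\dots,s_r).
\]
At $s_1=-n$ the product $s_1(s_1+1)\cdots(s_1+n-1)$ equals $(-1)^n n!$, and $\z_q(s_1+s_2+n,s_3,\dots,s_r)$ restricts to $\z_q(s_2,\dots,s_r)$. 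Its residue is therefore
\[
(-1)^n(-1)^n\, n!\,\mathcal{L}_{n+1}(-n)\,\Big(-\frac{1}{\log q}\Big)\z_q(s_2,\dots,s_r)
=-\,n!\,\mathcal{L}_{n+1}(-n)\frac{\z_q(s_2,\dots,s_r)}{\log q},
\]
which is the claimed value.

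The second step, which I expect to be the main obstacle, is to show that $\xi_{n+1}+\eta_{n+1}$ contributes no residue on $\mathcal{H}_{1,n}$. Each $R'_{1,m}$ and $R^*_{1,m}$ contains a summand $\mathcal{L}_{n+1}(s_1)/q_n(s_1)$, so these tails really do involve $1/q_n$, and a term-by-term look is not enough. I would instead trace where $1/q_n$ enters the block computation \eqref{equ}. It enters only through row $K=n+1$ of the system \eqref{eq19}, multiplied by the last column of $M_{II}(s_1)^{-1}$. Multiplying that row by $q_n(s_1)$ shows that the $1/q_n$-parts of the two tails combine into
\[
\frac{c(s_1)}{q_n(s_1)}\sum_{k\ge 1}\pm\frac{(s_1+n)\cdots(s_1+n+k-1)}{k!}\Big[\z_q(s_1+s_2+n+k,\dots)-\z_q(s_1+n+k,s_2,\dots)\Big],
\]
with $c(s_1)$ regular at $s_1=-n$ and with appropriate signs (note that $\xi_K$ should carry the minus sign coming from $W_I$). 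Every coefficient has the factor $(s_1+n)$, which cancels the simple zero of $q_n$. I would check the identity carefully against the formula for $R^*_{1,m}=R'_{1,m}$, and check that the resulting series still converges normally on compact subsets of $U_r(n+1)$, so that $\xi_{n+1}+\eta_{n+1}$ is holomorphic across $\mathcal{H}_{1,n}$.

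Finally, points of $\mathcal{H}_{1,n}$ lying on the other polar hyperplanes of Theorem~\ref{th3} form a proper analytic subset. The residue identity, proved off that subset, therefore holds as an identity of meromorphic functions on $\mathcal{H}_{1,n}$.
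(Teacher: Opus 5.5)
Your route is the paper's: read the residue off \eqref{6.6666}, where only the summand $\mathcal{L}_{n+1}(s_1)/q_n(s_1)$ of $H_{1,n+1}(s_1)$ is singular along $s_1=-n$. Your evaluation $(-1)^n\cdot(-1)^n n!\,\mathcal{L}_{n+1}(-n)\cdot(-1/\log q)$ is correct.

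The gap is your opening claim that every point of $\mathcal{H}_{1,n}$ off the other polar hyperplanes lies in $U_r(n+1)$. This is false. Membership in $U_r(n+1)$ also requires $\mathrm{Re}(s_2+\dots+s_j)>-1$ for $2\le j\le r$. Avoiding the polar hyperplanes only keeps the partial sums out of the discrete set $\mathbb{Z}_{\le 0}+\frac{2\pi i}{\log q}\mathbb{Z}$. For $r=2$, the point $(-n,-\frac32)$ lies on no other polar hyperplane, yet $\mathrm{Re}(s_1+s_2)=-n-\frac32<-(n+1)$. So with $K=n+1$ fixed you have only proved the formula on the open piece $\mathcal{H}_{1,n}\cap U_r(n+1)$. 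Your last paragraph, about the analytic subset of other polar hyperplanes, does not cover the rest.

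There are two short fixes.
\begin{itemize}
\item Argue, as the paper implicitly does, with an arbitrary $K>n$ and the covering $\bigcup_K U_r(K)=\mathbb{C}^r$. For $K\ge n+2$ the factor $1/q_n$ also enters $H_{1,i}$ for $n+2\le i\le K$ and $\mathcal{L}_j$ for $j\ge n+2$. However, every monomial of $\mathcal{L}_j(s_1)/q_{j-1}(s_1)$ contains each $1/q_u$ at most once, since each row of the Hessenberg determinant contributes at most one such factor. Moreover, $H_{1,i}$ for $i\ge n+2$ and every tail coefficient carry a Pochhammer prefactor $s_1(s_1+1)\cdots$ that contains $s_1+n$. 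So $H_{1,n+1}$ remains the only contributor.
\item Keep $K=n+1$ and invoke the identity theorem. The residue and $-n!\,\mathcal{L}_{n+1}(-n)\z_q(s_2,\dots,s_r)/\log q$ are both meromorphic on the connected hyperplane $\mathcal{H}_{1,n}\cong\mathbb{C}^{r-1}$, and they agree on a nonempty open subset.
\end{itemize}

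Separately, the step you single out as the main obstacle is not one. With $K=n+1$, the coefficient $R'_{1,m}(s_1)$ carries the prefactor $s_1(s_1+1)\cdots(s_1+n+m-1)$. This contains $s_1+n$ for every $m\ge 1$, and $1/q_n$ occurs in $R'_{1,m}$ only to the first power. So each coefficient is already holomorphic at $s_1=-n$. These coefficients grow at most polynomially in $m$. Meanwhile $\z_q(s_1+n+m,s_2,\dots,s_r)$ and $\z_q(s_1+s_2+n+m,s_3,\dots,s_r)$ are $O(q^m)$ uniformly on compacta of $U_r(n+1)$, because $q^k/[k]\le q$. Hence the tails converge normally there, and the term-by-term argument suffices. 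Your regrouping through row $K$ merely rediscovers the factor $s_1+n$.

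Note also that $N_{IJ}=-M_{IJ}$, so $W'_I+W_I=-M_{II}^{-1}M_{IJ}\big(V_J(s_1+s_2,s_3,\dots,s_r)+V_J(s_1,\dots,s_r)\big)$. Thus the bracket in your regrouped expression is a sum rather than a difference, and $R^*_{1,m}=-R'_{1,m}$ rather than $R'_{1,m}$. This does not affect the conclusion.
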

\begin{proof}
	From \eqref{eq28}	We have
	\begin{align*}
		V_I(s_1, s_2, \dots, s_r)=H_{II}(s_1)V_I(s_1+s_2, s_3, \dots, s_r)+W'_{I}(s_1, s_2, \dots , s_r)+W_{I}(s_1, s_2, \dots , s_r).
	\end{align*}
	 The entry on the first row of $W_{I}$ and $W'_{I}$ has no pole along the hyperplane $\mathcal{H}_{1,n}$ for all $ n \geq 0.$
	Also all the entries on the first row of $H_{II}$ contain the term $\dfrac{s_1}{q^{-s_1}-1}$ except the first entry $H_{1,1}(s_1)$. So only $H_{1, 1}(s_1)$ gives simple pole along  $\mathcal{H}_{1,0}$.
	Now consider the function $$\z_q(s_1, \dots, s_r)-H_{1,1}(s_1)\z_q(s_1+s_2, s_3, \dots , s_r),\text{ where }  H_{1,1}(s_1)=\dfrac{1}{q^{-s_1}-1}.$$ It has no pole along $\mathcal{H}_{1,0}$ on $U_r(K)$. Therefore the residue of $\z_q(s_1, \dots, s_r)$ along $\mathcal{H}_{1,0}$ is given by 
	\begin{align*}
		-\dfrac{1}{\text{log } q} \z_q(s_2, s_3, \dots, s_r).
	\end{align*}
	Similarly, only $H_{1, 2}(s_1)$ has simple pole along $\mathcal{H}_{1,1}$. So the function
	 $$\z_q(s_1, \dots, s_r)-H_{1,2}(s_1)\z_q(s_1+s_2+1, s_3, \dots , s_r),\text{ where }  H_{1,2}(s_1)=\dfrac{-s_1 \mathcal{L}_2(s_1)}{q^{-(s_1+1)}-1},$$
	has no pole along $\mathcal{H}_{1,1}$. 
Therefore the residue  of $\z_q(s_1, \dots, s_r)$ along the hyperplane $\mathcal{H}_{1,1}$ is given by
	\begin{align*}
		&-\dfrac{\mathcal{L}_2(-1)}{\text{log} q} \z_q(s_2, s_3, \dots, s_r).
	\end{align*} 
	In general, only $ H_{1, (n+1)}(s_1)$ has simple pole along $\mathcal{H}_{1,n}$ i.e., the function
	$$\z_q(s_1, \dots, s_r)-H_{1,{n+1}}(s_1)\z_q(s_1+s_2+n, s_3, \dots , s_r),$$ where  $$H_{1,{n+1}}(s_1)=\dfrac{{(-1)}^ns_1(s_1+1)\dots (s_1+n-1) \mathcal{L}_{n+1}(s_1)}{q^{-(s_1+n)}-1},$$ 
	has no pole along $\mathcal{H}_{1,n}$.
	Therefore the residue of $\z_q(s_1, \dots, s_r)$ along  $\mathcal{H}_{1,n}$  is given by
	\begin{align*}
		&-\dfrac{n!\mathcal{L}_{n+1}(-n)}{\text{log} q} \z_q(s_2, s_3, \dots, s_r).
	\end{align*}
	Hence the theorem. 
\end{proof}
\begin{theorem}
	The residue of the $q$-multiple zeta function of depth $r$ along the hyperplane $\mathcal{H}_{j ,k}$, where ~$k \geq 0$ and $2 \leq j \leq r$, is the restriction to $ \mathcal{H}_{j,k}$ of the function given by:
	\begin{align*}
		\sum_{s=0}^{k}   -(k-s)!\mathcal{L}_{k-s+1}(s-k)\dfrac{\z_q(s_{j+1},  \dots, s_r)}{\text{log }q} \times (0, s)^{th } \text{ entry of the matrix } \prod_{d=1}^{j-1}H(s_1+s_2+\dots +s_d).
	\end{align*}     
\end{theorem}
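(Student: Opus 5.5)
We argue by induction on $j$, iterating the matrix form \eqref{eq28} of the translation formula, in the same way the residue along $\mathcal{H}_{1,n}$ was obtained in Theorem \ref{th4}. Fix $k\geq 0$ and choose $K>k$ large enough that the hyperplane $\mathcal{H}_{j,k}$ meets $U_r(K)$. On $U_r(K)$ the vectors $V_J(s_1,\dots,s_r)$ and $V_J(s_1+s_2,s_3,\dots,s_r)$ are holomorphic. By the analysis in the proof of Theorem \ref{th3}, the terms $W_I$ and $W'_I$ only acquire poles along hyperplanes of the form $s_1\in -a+\frac{2\pi i}{\log q}\mathbb{Z}$. Hence they contribute nothing to the residue along $\mathcal{H}_{j,k}$ with $j\geq 2$. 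The same holds for $H_{II}(s_1)$, whose entries are holomorphic off the hyperplanes $s_1=-a+\frac{2\pi i}{\log q}\mathbb{Z}$. Consequently, along $\mathcal{H}_{j,k}$ the residue of $V_I(s_1,\dots,s_r)$ equals $H_{II}(s_1)$ times the residue of $V_I(s_1+s_2,s_3,\dots,s_r)$.

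Write $t_d=s_1+\dots+s_d$. Iterating \eqref{eq28} $j-1$ times gives
\begin{align*}
V_I(s_1,\dots,s_r)=H_{II}(t_1)H_{II}(t_2)\cdots H_{II}(t_{j-1})V_I(t_j,s_{j+1},\dots,s_r)+(\text{error terms}).
\end{align*}
Each error term is a product of finitely many $H_{II}(t_d)$ with a vector of type $W_I$ or $W'_I$ evaluated at $(t_d,s_{d+1},\dots,s_r)$. By Theorem \ref{th3} and the induction hypothesis on depth, every such error term is holomorphic along $\mathcal{H}_{j,k}$ on $U_r(K)$. The only exception would be a pole in the first variable $t_d=-a$, and these do not occur for $d<j$ generically on $\mathcal{H}_{j,k}$.

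Because all $H_{II}(t_d)$ are upper triangular, the first entry of the product $H_{II}(t_1)\cdots H_{II}(t_{j-1})V_I(t_j,\dots)$ is
\begin{align*}
\sum_{s}\Big(\prod_{d=1}^{j-1}H(t_d)\Big)_{0,s}\,\z_q(t_j+s,s_{j+1},\dots,s_r).
\end{align*}
This is a finite sum, because the matrix product of upper triangular matrices truncated to $I$ agrees with the $II$ block of the full product. Now apply Theorem \ref{th4} to the depth-$(r-j+1)$ function $\z_q(t_j+s,s_{j+1},\dots,s_r)$ in its first variable $t_j+s$. It has a pole where $t_j+s=-n$, so on $\mathcal{H}_{j,k}$ we need $n=k-s\geq 0$, that is $0\le s\le k$. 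The residue there is $-(k-s)!\mathcal{L}_{k-s+1}(s-k)\z_q(s_{j+1},\dots,s_r)/\log q$.

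The prefactor $\big(\prod H(t_d)\big)_{0,s}$ does not depend on $t_j$ in the transversal direction. It is also holomorphic at generic points of $\mathcal{H}_{j,k}$, so it simply multiplies the residue. Summing over $s=0,\dots,k$ yields the stated formula.

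The main obstacle is justifying that the remainder terms are holomorphic along $\mathcal{H}_{j,k}$. The difficulty is that the vectors $W_I,W'_I$ at level $d$ involve $V_J(t_d+\cdot,\dots)$ and $V_J(t_{d+1}+\cdot,\dots)$, where positivity of the shift by $K$ must be checked for all partial sums up to $j$. I would handle this by choosing $K>k$ so that every entry of $V_J$ is evaluated at a point of $U$ whenever $\operatorname{Re}(t_j)>-K$, using normal convergence as in Theorem \ref{th2}. I would also need to specify that residues are taken at generic points, away from intersections with other polar hyperplanes.
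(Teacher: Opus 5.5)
Your proposal is correct and follows essentially the same route as the paper: take $K>k$, iterate \eqref{eq28} $j-1$ times, observe that the accumulated remainder (the paper's $\mathfrak{W}_{j,I}$) and the entries of $\prod_{d=1}^{j-1}H_{II}(s_1+\dots+s_d)$ are holomorphic along $\mathcal{H}_{j,k}$ in $U_r(K)$, and then read off the residue of each entry $\z_q(s_1+\dots+s_j+s,s_{j+1},\dots,s_r)$ with $0\le s\le k$ via Theorem \ref{th4}. Your added observations supply details the paper leaves implicit: the $II$ block of a product of upper triangular matrices is the product of their $II$ blocks, and the residue is to be read at generic points of $\mathcal{H}_{j,k}$.
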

\begin{proof}
	Let $K$ be an integer such that $K >k$. Consider $\mathfrak{W}_I(s_1, \dots , s_r)=W'_{I}(s_1, s_2,  \dots , s_r)+W_{I}(s_1, s_2, \dots , s_r)$. Then from \eqref{eq28}  we have 
	\begin{align}
		\label{eq42}
		V_I(s_1, s_2, \dots, s_r)=H_{II}(s_1)V_I(s_1+s_2, s_3, \dots, s_r)+\mathfrak{W}_{I}(s_1, s_2, \dots , s_r).
	\end{align} 
	By iterating the formula \eqref{eq42} $j-1$ times, we get
	\begin{align}
		\label{eqq30}
		&	V_I(s_1, s_2, \dots, s_r)\nonumber \\
		& =\bigg(\prod_{d=1}^{j-1}H_{II}(s_1+s_2+\dots +s_d)\bigg)	V_I(s_1+ \dots +s_j, s_{j+1}, \dots, s_r)+ \mathfrak{W}_{j,I}(s_1, s_2, \dots , s_r),
	\end{align}
	where $\mathfrak{W}_{j, I}(s_1, s_2, \dots , s_r)$ is a column matrix whose entries are the finite sum of the functions in $s_1, \dots , s_{j-1}$ multiplied with the functions which are holomorphic in $U_r(K)$. So they have no pole along the hyperplane $\mathcal{H}_{j, k}$ in $U_r(K)$. 
	Also the entries of $\prod_{d=1}^{j-1}H_{II}(s_1+s_2+\dots +s_d)$ have no pole along the hyperplane $\mathcal{H}_{j, k}$, because they are rational $q$-exponential functions in $s_1, s_2, \dots , s_{j-1}$.
	The first $k+1$ entries of $V_I(s_1+ \dots +s_j, s_{j+1}, \dots, s_r)$ are $\z_q(s_1+ \dots +s_j+s, s_{j+1}, \dots, s_r)$, where $0 \leq s \leq k$ and the remaining entries are $\z_q(s_1+ \dots +s_j+s, s_{j+1}, \dots, s_r)$, where $s >k$.  By Theorem \ref{th3}, $\z_q(s_1+ \dots +s_j+s, s_{j+1}, \dots, s_r)$, where $s >k$, have no pole along the hyperplane $\mathcal{H}_{j, k}$. So only the functions $\z_q(s_1+ \dots +s_j+s, s_{j+1}, \dots, s_r)$, where $0 \leq s \leq k$, have poles along $\mathcal{H}_{j, k}$ in $U_r(K)$.
From equation \eqref{6.6666}, we can write
	\begin{align*}
		&\z_q(s_1+ \dots +s_j+s, s_{j+1}, \dots, s_r)\nonumber \\
		&=\sum_{d=1}^{K}H_{1,d}(s_1+ \dots +s_j+s)\z_q(s_1+ \dots +s_{j+1}+d-1, s_{j+2}, \dots , s_r)\nonumber\\
		& + \xi_K(s_1+ \dots +s_j+s, s_{j+1}, \dots, s_r)+\eta_K(s_1+ \dots +s_{j}+s, s_{j+1}, s_{j+2}, \dots, s_r).
	\end{align*}
	Following the same technique of finding residue in Theorem \ref{th4}, we get the residue of  $\z_q(s_1+ \dots +s_j+s, s_{j+1}, \dots, s_r)$ along $\mathcal{H}_{j, k}$ and is given by
	\begin{align*}
		-(k-s)!\mathcal{L}_{k-s+1}(s-k)\dfrac{\z_q(s_{j+1},  \dots, s_r)}{\text{log }q}.  
	\end{align*}
	Therefore from \eqref{eqq30}, 
	the residue of $\z_q(s_1, s_2, \dots, s_r)$ along the hyperplane $\mathcal{H}_{j, k}$ in $  U_r(K)$ is given by the restriction to $\mathcal{H}_{j, k} \cap U_r(K)$ of the function
	\begin{align*}
		\sum_{s=0}^{k}   -(k-s)!\mathcal{L}_{k-s+1}(s-k)\dfrac{\z_q(s_{j+1},  \dots, s_r)}{\text{log }q} \times (0, s) \text{th entry of the matrix } \prod_{d=1}^{j-1}H(s_1+s_2+\dots +s_d).
	\end{align*} 
	The theorem follows, since for $K>k$, the open sets $U_r(K)$ cover $\c$. 
\end{proof}

\section{Meromorphic continuation of BZ model of $q$-multiple zeta function }
Consider an open subset $\mathcal{U}_r=\{(s_1, \dots , s_r) \in \c: \text{Re}(s_1+\dots +s_d) >d, 1 \leq d \leq r\}$ of $\c$. On this open set $\mathcal{U}_r$, Bradley \cite{11} and  Zhao \cite{1} defined another model of the $q$-multiple zeta function by 
\begin{align*}
	\zeta_q^{\text{BZ}}(s_1,\dots ,s_r) 
	&=\sum_{k_1>\dots >k_r\geq 1}\dfrac{q^{k_1(s_1-1)+ \dots +k_r(s_r-1)}}{[k_1]^{s_1} \dots [k_r]^{s_r}}.
\end{align*} 
This model is called the BZ-model of the $q$-multiple zeta function and as $q \r 1$, it also gives the multiple zeta function. The case $r=1$ was first studied by M. Kaneko et al. in \cite{2}. The analytic continuation of the BZ model was studied by J. Zhao \cite{1} using a different approach, namely, the binomial theorem. Here we give some remarks on how we can find the analytic continuation of this model using a similar process as used in the case of the SZ model.
\begin{remark}
	For any integer $n \geq 1$ and any complex number $s$, consider the difference $$ \mathcal{B}= q^{-(n-1)}{\bigg(\frac{[n]}{q^{n-1}}\bigg)}^{-s} - q^{-n}{\bigg(\frac{[n]}{q^n}+1\bigg)}^{-s}.$$ 
	Then by the Taylor series expansion 
	\begin{align}
		\label{eq5.17}
		\mathcal{B}
		&=q^{-n }{\bigg(\frac{q^{n}}{[n]}\bigg)}^{s}(q^{1-s}-1)+q^{-n } \sum_{k \geq 0} {(-1)}^{k} \frac{s (s+1) \cdots (s + k )}{(k+1)!}{\bigg(\frac{q^n}{[n]}\bigg)}^{s+k+1}.
	\end{align}
	Taking summation over $n$  from $1$ to $\infty$ on both sides of \eqref{eq5.17}, we get
	\begin{align*}
		&1= \sum_{n \geq 1} \bigg[{\frac{q^{n{(s-1)}}}{[n]^{s}}}(q^{1-s}-1)+\sum_{k \geq 0} {(-1)}^{k} \frac{s (s +1)\dots (s + k )}{(k+1)!}{\frac{q^{n(s+k)}}{[n]^{s+k+1}}}\bigg],
	\end{align*}
	which gives the following translation formula for the $q$-analogue of the Riemann zeta function
	\begin{align*}
		&1= (q^{1-s}-1)\z_q^{\text{BZ}}(s) +\sum_{k \geq 0} {(-1)}^{k} \frac{s(s+1) \cdots (s + k )}{(k+1)!}\z_q^{\text{BZ}}(s+k+1),  
	\end{align*}
	where Re$(s)>1$.	
	Now taking summation on both sides of \eqref{eq5.17} over $n=n_1$ from $n_2+1$ to $\infty$ and following the same procedure as that of the SZ model, we get
	\begin{align}
		\label{eq5.22}
		&\sum_{k \geq 0} {(-1)}^{k} \frac{s_1(s_1 +1) \cdots (s_1 + k -1)}{k!}{\frac{q^{n_2(s_1+k-1)}}{[n_2]^{s_1+k}}}\nonumber \\
		&=\sum_{n_1=n_2+1}^{\infty}\bigg[{\frac{q^{n_1{(s_1-1)}}}{[n_1]^{s_1}}}(q^{1-s_1}-1)+\sum_{k \geq 0} {(-1)}^{k} \frac{s_1 (s_1 +1)\cdots (s_1 + k )}{(k+1)!}{\frac{q^{n_1(s_1+k)}}{[n_1]^{s_1+k+1}}}\bigg].
	\end{align}
	Multiplying both sides of \eqref{eq5.22} by  $\prod_{i=2}^{r}{\bigg(\frac{q^{n_i(s_i-1)}}{[n_i]^{s_i}}\bigg)}$ and taking summation over $n_r$ from $1$ to $\infty$, over $n_{r-1}$ from $n_r+1$ to $\infty$ and so on upto over $n_2$ from $n_3+1$ to $\infty$, we get
	\begin{align*}
		&\sum_{n_2 > n_3 > \dots >n_r \geq 1}\sum_{k \geq 0} {(-1)}^{k} \frac{s_1 (s_1 +1) \dots (s_1 + k -1)}{k!}\bigg[\dfrac{q^{{n_2}{(s_1+s_2+k-1)}}q^{n_3(s_3-1)}\dots q^{n_r(s_r-1)}}{{[n_2]}^{s_1+s_2+k} [n_3]^{s_3}\dots [n_r]^{s_r}} \nonumber \\
		& \ \hspace{5cm} +(1-q) \dfrac{q^{{n_2}{(s_1+s_2+k-2)}}q^{n_3(s_3-1)}\dots q^{n_r(s_r-1)}}{{[n_2]}^{s_1+s_2+k-1} [n_3]^{s_3}\dots [n_r]^{s_r}}\bigg]\nonumber\\
		&= (q^{1-s_1}-1)\sum_{n_1 >n_2 > n_3 > \dots >n_r \geq 1}\dfrac{q^{{n_1}{(s_1-1)}}q^{n_2(s_2-1)}\dots q^{n_r(s_r-1)}}{{[n_1]}^{s_1} [n_2]^{s_2}\dots [n_r]^{s_r}} \nonumber \\
		& \ \ + \sum_{n_1 >n_2 > n_3 > \dots >n_r \geq 1}\sum_{k \geq 0} {(-1)}^{k} \frac{s_1 (s_1 +1)\cdots (s_1 + k )}{(k+1)!}\dfrac{q^{{n_1}{(s_1+k)}}q^{n_2(s_2-1)}\dots q^{n_r(s_r-1)}}{{[n_1]}^{s_1+k+1} [n_2]^{s_2}\dots [n_r]^{s_r}}. 
	\end{align*}
	Now following a similar technique of obtaining the translation formula of the SZ model, we get the following translation formula for the BZ model and the corresponding meromorphic continuation.
\end{remark}

\begin{theorem}
	\label{th5}
	Let $r \geq 2$ be any positive integer. Then the $q$-multiple zeta function of depth $r$ extends to a meromorphic function on $\mathbb{C}^r$ satisfying 
	\begin{align}
		\label{eq48}
		&\sum_{k \geq 0} {(-1)}^{k} \frac{s_1 (s_1 +1)\cdots (s_1 + k -1)}{k!}\bigg(\z_q^{\text{BZ}}(s_1+s_2+k,s_3, \dots , s_r)\nonumber \\
		& \ \ +(1-q)\z_q^{\text{BZ}}(s_1+s_2+k-1,s_3, \dots , s_r)\bigg)\nonumber \\
		& =\sum_{k \geq 0}{(-1)}^{k} \frac{s_1(s_1 +1) \cdots (s_1 + k )}{(k+1)!}\z_q^{\text{BZ}}(s_1+k+1, s_2 , \dots , s_r)\nonumber \\
		& \ \ +( q^{-(s_1-1)}-1)\z_q^{\text{BZ}}(s_1,s_2, \dots , s_r),
	\end{align} 	
	where all the series appearing in \eqref{eq48} converge normally on any compact subset of $\mathcal{U}_r$.
\end{theorem}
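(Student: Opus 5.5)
The plan mirrors the SZ case, Theorems \ref{th1} and \ref{th2}, in two stages: first establish \eqref{eq48} on $\mathcal{U}_r$, then use it to continue $\z_q^{\text{BZ}}$ by induction on $r$ and on a parameter $N$.

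\textbf{Stage 1: the identity on $\mathcal{U}_r$.} Start from the identity obtained in the preceding Remark by multiplying \eqref{eq5.22} by $\prod_{i\ge 2} q^{n_i(s_i-1)}[n_i]^{-s_i}$ and summing.

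The left side needs one identification. Expanding the left side of \eqref{eq5.22} in powers of $q^{n_2}/[n_2]$ uses $[n_2+1]=[n_2]+q^{n_2}$, together with the extra prefactor $q^{-n_2}$ coming from $q^{-n}$ in $\mathcal{B}$. Writing $q^{-n_2}=q^{-n_2}\cdot 1$ and using $1=(1-q)[n_2]+q^{n_2}$ splits each term into two pieces:
\begin{itemize}
\item the $q^{n_2}$ piece gives $\z_q^{\text{BZ}}(s_1+s_2+k,s_3,\dots,s_r)$;
\item the $(1-q)[n_2]$ piece lowers the exponent of $[n_2]$ by one and gives $(1-q)\z_q^{\text{BZ}}(s_1+s_2+k-1,s_3,\dots,s_r)$.
\end{itemize}
These are exactly the bracketed terms in the Remark. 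The right side is already in the form $(q^{1-s_1}-1)\z_q^{\text{BZ}}(s_1,\dots,s_r)$ plus the $k$-series in $\z_q^{\text{BZ}}(s_1+k+1,s_2,\dots,s_r)$.

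To justify interchanging the sums, prove a BZ analogue of Proposition \ref{pro1}. Write the summand as
\[
\frac{q^{n_1(s_1+k)}}{[n_1]^{s_1+k+1}}=\frac{q^{n_1(s_1-1)}}{[n_1]^{s_1}}\Big(\frac{q^{n_1}}{[n_1]}\Big)^{k+1}.
\]
Use $q^{n_1}/[n_1]\le c<1$ for $n_1\ge 2$ (note $n_1>n_2\ge1$). Summability of the base family on compact subsets of $\mathcal{U}_r$ follows from \cite[Proposition 2.2]{1} as in Proposition \ref{p1}, and $\sum_k b(b+1)\cdots(b+k)c^{k+1}/(k+1)!$ converges. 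The two left-hand series are handled the same way, with the first depth-$(r-1)$ argument shifted by $k$ or $k-1$. For $k=0$ the term $(1-q)\z_q^{\text{BZ}}(s_1+s_2-1,\dots)$ carries coefficient $1$, and its argument lies in the convergence region provided $\text{Re}(s_1+s_2)-1$ is large enough. This is the point to check most carefully, and if needed I would first restrict to a smaller open subset where everything converges and then extend by the identity theorem.

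\textbf{Stage 2: continuation.} Let $\mathcal{U}_r(N)$ be defined by $\text{Re}(s_1+\dots+s_d)>d-N$. By induction on $r$, the base case being the meromorphy of $\z_q^{\text{BZ}}(s)$ from \cite{2}, the left side of \eqref{eq48} is meromorphic. Then argue by induction on $N$:
\begin{itemize}
\item On $\mathcal{U}_r(N)$, each term $\z_q^{\text{BZ}}(s_1+k+1,\dots)$ on the right has its argument in $\mathcal{U}_r(N-1)$, so it is meromorphic by hypothesis.
\item The tail $k\ge N-1$ is holomorphic, by the same bound as for \eqref{eq177}: factor out $(q^{n_1}/[n_1])^{N}$ so that the base family has argument in $\mathcal{U}_r$.
\end{itemize}
Solving \eqref{eq48} for $(q^{1-s_1}-1)\z_q^{\text{BZ}}(s_1,\dots,s_r)$ and dividing by the nonzero-generic factor $q^{1-s_1}-1$ gives meromorphy on $\mathcal{U}_r(N)$. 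Since the sets $\mathcal{U}_r(N)$ cover $\mathbb{C}^r$, the continuation holds on all of $\mathbb{C}^r$.

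The main obstacle is the normal convergence of the series with shifted argument $s_1+s_2+k-1$, and the bookkeeping of the extra factor $(1-q)$ in the induction.
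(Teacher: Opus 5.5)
Your proposal is correct and follows the paper's own route. The paper derives the identity exactly as you do, via the Remark's telescoping of $\mathcal{B}$, the Taylor expansion in $q^{n_2}/[n_2]$, and the split $1=(1-q)[n_2]+q^{n_2}$; it then says the normal-convergence and induction-on-$N$ arguments of Theorems \ref{th1} and \ref{th2} carry over, which is what you do on the sets $\mathcal{U}_r(N)$.

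The point you flag needs no restriction to a smaller set. For $1\le d\le r-1$, the condition $\text{Re}(s_1+\dots+s_{d+1})>d+1$ defining $\mathcal{U}_r$ says exactly that $(s_1+s_2-1,s_3,\dots,s_r)\in\mathcal{U}_{r-1}$, so the $(1-q)\zeta_q^{\text{BZ}}(s_1+s_2+k-1,\dots)$ series already converges normally on compact subsets of all of $\mathcal{U}_r$.
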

\begin{remark}
	The translation formula $\eqref{eq48}$ has a matrix representation using which we can locate the poles of the BZ model, which were already located by J. Zhao in \cite{1} using a different approach. The matrix representation of $\eqref{eq48}$ is given by
	\begin{align*}
		\label{eq46}
		A(s_1 )\mathcal{V}(s_1+s_2, s_3, \dots , s_r)=B(s_1)\mathcal{V}(s_1, s_2, \dots, s_r),
	\end{align*}
	where 
	\begin{align*}
		\mathcal{V}(s_1, s_2, \dots, s_r)=	
		\begin{pmatrix}
			\z_q^{BZ}(s_1, s_2, \dots , s_r)
			\vspace{.5em}\\ 
			\vspace{.5em}
			\z_q^{BZ}(s_1+1, s_2, \dots , s_r)\\
			\vspace{.5em}
			\z_q^{BZ}(s_1+2, s_2, \dots , s_r)\\
			\vdots 
		\end{pmatrix},
	\end{align*}
	\begin{align*}
		B(s_1)	=
		\begin{pmatrix}
			1 & \dfrac{s_1}{(q^{-(s_1-1)}-1)} & -\dfrac{s_1(s_1+1)}{2!(q^{-(s_1-1)}-1)} & \dfrac{s_1(s_1+1)(s_1+2)}{3!(q^{-(s_1-1)}-1)}  & \dots \\
			0 & 1 &\dfrac{s_1+1}{(q^{-s_1}-1)} & -\dfrac{(s_1+1)(s_1+2)}{2!(q^{-s_1}-1)} &  \dots \\
			0 & 0 & 1 & \dfrac{s_1+2}{(q^{-(s_1+1)}-1)} & \dots \\
			0 & 0 & 0 & 1 & \dots \\
			\vdots & \vdots & \vdots & \vdots   & \ddots
		\end{pmatrix},
	\end{align*}
	\begin{align*}
		A(s_1)=
		\begin{pmatrix}
			\dfrac{1-q}{(q^{-(s_1-1)}-1)} & -\dfrac{(1-q)s_1-1}{(q^{-(s_1-1)}-1)} & \dfrac{\frac{(1-q)s_1(s_1+1)}{2!}-s_1}{(q^{-(s_1-1)}-1)} &  -\dfrac{\frac{(1-q)s_1(s_1+1)(s_1+2)}{3!}-\frac{s_1(s_1+1)}{2!}}{(q^{-(s_1-1)}-1)}& \dots \\
			0 & \dfrac{1-q}{(q^{-s_1}-1)} & -\dfrac{(1-q)(s_1+1)-1}{(q^{-s_1}-1)} & \dfrac{\frac{(1-q)(s_1+1)(s_1+2)}{2!}-(s_1+1)}{(q^{-s_1}-1)} & \dots \\
			0 & 0 & \dfrac{1-q}{(q^{-(s_1+1)}-1)} & -\dfrac{(1-q)(s_1+2)-1}{(q^{-(s_1+1)}-1)} & \dots \\
			0 & 0 &0 & \dfrac{1-q}{(q^{-(s_1+2)}-1)} &\dots \\
			\vdots & \vdots & \vdots & \vdots & \ddots
		\end{pmatrix}.
	\end{align*} 
\end{remark}
Using the techniques of Theorem \ref{th3}, one can also prove the following theorem:
\begin{theorem}
	All the poles of the BZ model of $q$-multiple zeta function of depth $r$  are simple and are given by the set $\mathfrak{F}'_=\bigg\{(s_1, s_2, \dots , s_r) \in \mathbb{C}^r:  s_1 \in 1+ \dfrac{2\pi i}{\text{log } q}\mathbb{Z} \text{ or } s_1 \in \mathbb{Z}_{\leq 0}+\dfrac{2\pi i}{\text{log }q}\mathbb{Z}_{\neq 0} \text{ or }  s_1+s_2 +\dots +s_j \in \mathbb{Z}_{\leq j}+\dfrac{2\pi i}{\text{log }q}\mathbb{Z}, 2 \leq j \leq r, i=\sqrt{-1}\bigg\}.$
\end{theorem}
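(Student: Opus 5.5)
We follow the scheme of the proof of Theorem \ref{th3}: induction on the depth $r$, applied to the matrix form of the translation formula \eqref{eq48}. The base case $r=1$ comes from \cite{2}. There, $\z_q^{\text{BZ}}(s)$ has simple poles at $s\in 1+\frac{2\pi i}{\log q}\mathbb{Z}$ and at $s\in\mathbb{Z}_{\le 0}+\frac{2\pi i}{\log q}\mathbb{Z}_{\neq 0}$; the points with zero imaginary shift at nonpositive integers are removable. One can also check this directly from the depth-one translation formula $1=(q^{1-s}-1)\z_q^{\text{BZ}}(s)+\sum_k(\cdots)$, since $q^{1-s}-1$ vanishes exactly on $1+\frac{2\pi i}{\log q}\mathbb{Z}$. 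For $r\ge 2$ we assume the statement for all depths $\le r-1$.

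Fix $K\ge1$ and write the relation $A(s_1)\mathcal{V}(s_1+s_2,\dots)=B(s_1)\mathcal{V}(s_1,\dots)$ in block form with the index sets $I_K$ and $J_K$, exactly as in \eqref{equation}. $B(s_1)$ is unitriangular, so $B_{II}(s_1)^{-1}$ exists, and we obtain the analogue of \eqref{eq28}:
\[
\mathcal{V}_I(s_1,\dots,s_r)=B_{II}(s_1)^{-1}A_{II}(s_1)\mathcal{V}_I(s_1+s_2,s_3,\dots,s_r)+\mathcal{W}'_I+\mathcal{W}_I .
\]
Here $\mathcal{W}'_I$ and $\mathcal{W}_I$ involve $\mathcal{V}_J$, which is holomorphic on a suitable region of the form $\{\text{Re}(s_1+\dots+s_d)>d-K\}$. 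The normal convergence of these tails is proved as in Theorem \ref{th2}.

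The entries of $B_{II}^{-1}A_{II}$ and of $B_{II}^{-1}B_{IJ}$ are polynomials in $s_1$ divided by products of the factors $q^{-(s_1+a-1)}-1$ with $0\le a\le K-1$. Their possible poles therefore lie on $s_1\in 1-a+\frac{2\pi i}{\log q}\mathbb{Z}$. The factor $q^{-(s_1-1)}-1$ gives genuine poles on $s_1\in1+\frac{2\pi i}{\log q}\mathbb{Z}$. By the induction hypothesis, the depth $r-1$ entries $\z_q^{\text{BZ}}(s_1+s_2+m,s_3,\dots)$ contribute poles only on $s_1+\dots+s_j\in\mathbb{Z}_{\le j}+\frac{2\pi i}{\log q}\mathbb{Z}$ for $2\le j\le r$. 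The shift by $m$ only moves these into the same lattice, since $\mathbb{Z}_{\le j-1}-m\subset\mathbb{Z}_{\le j}$. Letting $K$ vary, the sets cover $\mathbb{C}^r$, and we obtain the inclusion of the polar set in $\mathfrak{F}'$. Simplicity follows because every denominator factor vanishes to first order on a distinct hyperplane.

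\textbf{The main obstacle} is the set $s_1\in\mathbb{Z}_{\le 0}+\frac{2\pi i}{\log q}\mathbb{Z}$. We must show that on the real points $s_1=-a$ the apparent poles cancel, while on the shifted points $s_1=-a+\frac{2\pi i b}{\log q}$ with $b\neq0$ they survive. Our first approach is to note that the numerators of the relevant entries contain $s_1(s_1+1)\cdots$, mirroring $t(t+1)\cdots(t+n-2)\mathcal{L}_n(t)$ in Proposition \ref{p5.4}. The factor $(s_1+a)$ then kills the zero of $q^{-(s_1+a)}-1$ at $s_1=-a$, but not at $s_1=-a+\frac{2\pi ib}{\log q}$ with $b\ne0$. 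For the latter we compute the residue as in Theorem \ref{th4}: isolate the single entry of the first row carrying the factor $q^{-(s_1+a)}-1$, and check that its coefficient, a nonzero polynomial value times $\z_q^{\text{BZ}}(s_2,\dots,s_r)$ (adjusted by $(1-q)$ terms from $A$), is not identically zero. This shows that these are genuine poles and completes the equality with $\mathfrak{F}'$.
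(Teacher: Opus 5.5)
Your inclusion of the polar set in $\mathfrak{F}'$ follows the route the paper intends; the paper itself offers only ``using the techniques of Theorem~\ref{th3}''. Your real-point cancellation is correct, but the mechanism should be stated. With rows indexed from $0$ and $c\ge 1$, the numerators of the row-$c$ entries of $B(s_1)$ and $A(s_1)$ do \emph{not} vanish at the zero $s_1=1-c$ of their common denominator $q^{-(s_1+c-1)}-1$. The needed factor $(s_1+c-1)$ comes from the product $s_1(s_1+1)\cdots(s_1+c-1)$, which accumulates along every path from row $0$ to row $c$ in the expansion of the first row of $B_{II}^{-1}$.

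The genuine gap is the reverse inclusion, i.e.\ that every point of $\mathfrak{F}'$ is actually a pole. Your step ``isolate the single entry of the first row carrying $q^{-(s_1+a)}-1$, as in Theorem~\ref{th4}'' fails. In Theorem~\ref{th4} a single entry survives only because, at a \emph{real} point, the Pochhammer prefactor kills that denominator in all the other entries. At $s_1=-a+\frac{2\pi i b}{\log q}$ with $b\neq 0$ the prefactor does not vanish. So $1/(q^{-(s_1+a)}-1)$ survives in every first-row entry of $B_{II}^{-1}A_{II}$ except the first $a+1$, and in every coefficient of $B_{II}^{-1}B_{IJ}$ and $B_{II}^{-1}A_{IJ}$. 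In particular the tails $\mathcal{W}_I,\mathcal{W}'_I$ are not pole-free there. The residue is therefore an infinite combination of the values $\zeta_q^{\text{BZ}}(s_2+m-a+\frac{2\pi i b}{\log q},s_3,\dots)$ and of depth-$r$ values $\zeta_q^{\text{BZ}}(K+n-a+\frac{2\pi i b}{\log q},s_2,\dots)$. Nothing in your argument shows this combination is $\not\equiv 0$. No single term even has the shape you predict, because $\zeta_q^{\text{BZ}}$ is not invariant under $s\mapsto s+\frac{2\pi i}{\log q}$ (the factor $[n]^{-s}$ changes). The true residue, obtainable for instance from Zhao's binomial expansion, is $-\frac{(1-q)^{s_1}}{\log q}\frac{s_1(s_1+1)\cdots(s_1+a)}{(a+1)!}\zeta_q^{\text{BZ}}(s_2,\dots,s_r)$, and it appears only after all these contributions are summed.

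The same objection applies on $s_1\in 1+\frac{2\pi i}{\log q}\mathbb{Z}$. There \emph{every} first-row coefficient carries $1/(q^{1-s_1}-1)$, so ``gives genuine poles'' is asserted, not shown. You also give no genuineness argument at all for $s_1+\dots+s_j\in\mathbb{Z}_{\le j}+\frac{2\pi i}{\log q}\mathbb{Z}$ with $j\ge 2$. For a non-real constant, several shifts $m$ land on the same hyperplane, and their contributions could cancel. (The first row of $A(s_1)$ acts on $\zeta_q^{\text{BZ}}(s_1+s_2-1,\dots),\zeta_q^{\text{BZ}}(s_1+s_2,\dots),\dots$, so $m\ge -1$; the term $m=-1$ is the only source of $s_1+s_2\in 2+\frac{2\pi i}{\log q}\mathbb{Z}$.)

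For the $s_1$-hyperplanes, a clean repair avoids inverting $B$ altogether. On $s_1=1-c+\frac{2\pi i b}{\log q}$ with $c\ge 1$ one has $q^{1-s_1}-1=q^c-1\neq 0$, and the left side of \eqref{eq48} is regular along these hyperplanes. Taking residues in \eqref{eq48} then gives the recursion $(q^c-1)\rho_c=-\sum_{k=0}^{c-1}(-1)^k\frac{s_1(s_1+1)\cdots(s_1+k)}{(k+1)!}\rho_{c-1-k}$ for the residues $\rho_c$ along these hyperplanes, and this recursion does not depend on the depth. Comparing with depth one gives $\rho_c=\lambda_c\rho_0$, where for $c\ge 1$ one has $\lambda_c=0$ exactly when $b=0$. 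Separately, $\rho_0=-\frac{(1-q)^{s_1}}{\log q}\zeta_q^{\text{BZ}}(s_2,\dots,s_r)\not\equiv 0$ follows by letting $s_1\to 1+\frac{2\pi i b}{\log q}$ in $(q^{1-s_1}-1)\zeta_q^{\text{BZ}}(s_1,\dots,s_r)$, using the defining series. This gives the real-point cancellation and the genuineness of the $s_1$-poles at once. The hyperplanes with $j\ge 2$ still need their own argument; Zhao's expansion settles all cases.
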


\begin{remark}
	A variant of the SZ model of $q$-multiple zeta function, called the $q$-multiple zeta-star function, is defined on $U_r$ by
	\begin{equation*}
		\z_q^{\text{SZ}\star}(s_1,...,s_r)=\sum_{k_1 \geq \dots \geq k_r\geq 1}\dfrac{q^{k_1s_1+ \dots +k_rs_r}}{[k_1]^{s_1} \dots [k_r]^{s_r }}.
	\end{equation*}
	One can find the meromorphic continuation of this function to the whole complex plane and even locate the poles. For that, consider $$A= {\Bigg(\frac{[n]}{q^{n-1}}\Bigg)}^{-s} - {\Bigg(\frac{[n]}{q^n}+1\Bigg)}^{-s},$$	
	where $n$ is a positive integer and $s$ is a complex number. By the Taylor series expansion, we get
	\begin{align}
		\label{eq5.4600}
		{\Bigg(\frac{[n]}{q^{n-1}}\Bigg)}^{-s} - {\Bigg(\frac{[n]}{q^n}+1\Bigg)}^{-s}={\bigg(\frac{q^{n}}{[n]}\bigg)}^{s}(q^{-s}-1)+\sum_{k \geq 0} {(-1)}^{k} \frac{s(s +1) \dots (s + k )}{(k+1)!}{\bigg(\frac{q^n}{[n]}\bigg)}^{s+k+1}.
	\end{align}
	Taking $n=n_{1}$ and summing over $n_1$ from $n_2$ to $\infty$ on both sides of \eqref{eq5.4600}, multiplying by $ \prod_{i=2}^{r}{\bigg(\frac{q^{n_i}}{[n_i]}\bigg)}^{s_i}$ and then taking summation over $n_r$ from $1$ to $\infty$, over $n_{r-1}$ from $n_r$ to $\infty$ and so on up to over $n_2$ from $n_3$ to $\infty$, we get 
	\begin{align*}
		&\sum_{n_2 \geq n_3 \geq \dots \geq n_r \geq 1} \dfrac{q^{{n_2}{(s_1+s_2)}}q^{n_3s_3+ \dots +n_rs_r}}{[n_2]^{s_1+s_2} [n_3]^{s_3}\dots [n_r]^{s_r}} \nonumber \\
		&= (1-q^{s_1})\sum_{n_1 \geq n_2 \geq n_3 \geq \dots \geq n_r \geq 1}\dfrac{q^{n_1s_1+ \dots +n_rs_r}}{{[n_1]}^{s_1}\dots [n_r]^{s_r}} \nonumber \\
		& \ \ + q^{s_1}\sum_{n_1 \geq n_2 \geq  n_3 \geq \dots \geq n_r \geq 1}\sum_{k \geq 0} {(-1)}^{k} \frac{s_1 (s_1 +1)\cdots (s_1 + k )}{(k+1)!}\dfrac{q^{{n_1}{(s_1+k+1)}}q^{{n_2}{s_2}+n_3s_3+ \dots +n_rs_r}}{{[n_1]}^{s_1+k+1}[n_2]^{s_2} [n_3]^{s_3}\dots [n_r]^{s_r}}.
	\end{align*}
	Following the techniques used in the case of SZ model, one can obtain the following theorem:
	\begin{theorem}
		Let $r \geq 2$ be any positive integer. Then the $q$-multiple zeta-star function of depth $r$ extends to a meromorphic function on $\mathbb{C}^r$ satisfying the translation formula
		\begin{align}
			\label{eq5.47}
			&\z_q^{\text{SZ}\star}(s_1+s_2,s_3, \dots , s_r)\nonumber \\
			& =q^{s_1}\sum_{k \geq 0}{(-1)}^{k} \frac{s_1(s_1 +1) \cdots (s_1 + k )}{(k+1)!}\z_q^{\text{SZ}\star}(s_1+k+1, s_2 , \dots , s_r)+(1-q^{s_1})\z_q^{\text{SZ}\star}(s_1,s_2, \dots , s_r),
		\end{align}
		where the series on the R.H.S. converges normally on any compact subset of $U_r$. 
	\end{theorem}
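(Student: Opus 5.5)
The plan follows the same two stages as Theorems \ref{th1} and \ref{th2}. We first establish \eqref{eq5.47} on $U_r$ as an identity between normally convergent series, and then continue $\z_q^{\text{SZ}\star}$ meromorphically by induction on the depth $r$ and on $N$, using the open sets $U_r(N)$.

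\textbf{Step 1 (the identity on $U_r$).} Summing \eqref{eq5.4600} over $n_1$ from $n_2$ to $\infty$ telescopes the left side to $\big([n_2]/q^{n_2-1}\big)^{-s_1}=q^{s_1}\big(q^{n_2}/[n_2]\big)^{s_1}$. Unlike the SZ case, this requires no Taylor re-expansion, because the lower limit is $n_2$ rather than $n_2+1$. This is exactly why the left side of \eqref{eq5.47} is a single function and not a series.

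We then multiply by $\prod_{i\ge2}(q^{n_i}/[n_i])^{s_i}$, sum over $n_2\ge\dots\ge n_r\ge1$, and divide by $q^{s_1}$. This gives the displayed identity preceding the theorem. To identify the double sums with $\z_q^{\text{SZ}\star}(s_1+k+1,s_2,\dots,s_r)$ we must interchange the sums over $k$ and over the $n_i$. For this we repeat the proof of Proposition \ref{pro1} with non-strict inequalities:
\begin{itemize}
\item the bound $(q^{n_1}/[n_1])^{k+1}\le c^{k+1}$ with $c<1$ still holds;
\item normal summability of the star family on compact subsets of $U_r$ follows as in Proposition \ref{p1} and Corollary \ref{c1}, since $\zeta^\star_q$ is a finite sum of $\zeta_q$'s with merged arguments, and these still lie in $U_{r'}$ for the relevant depths.
\end{itemize}
Hence all series converge normally on compact subsets of $U_r$, and rearrangement is justified.

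\textbf{Step 2 (meromorphic continuation).} We induct on $r$. For $r=1$ the star and non-star functions coincide, and the continuation is known by \cite{2}.

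For depth $r\ge2$, the left side $\z_q^{\text{SZ}\star}(s_1+s_2,s_3,\dots,s_r)$ has depth $r-1$, so it is meromorphic on $\mathbb{C}^r$ by the induction hypothesis. We solve \eqref{eq5.47} for $\z_q^{\text{SZ}\star}(s_1,\dots,s_r)$ by dividing by $1-q^{s_1}$. This factor is an entire function that is not identically zero, so the quotient is meromorphic.

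We then induct on $N$ exactly as in Theorem \ref{th2}. Assume $\z_q^{\text{SZ}\star}$ of depth $r$ is meromorphic on $U_r(N-1)$. Then each term $\z_q^{\text{SZ}\star}(s_1+k+1,\dots)$ is meromorphic on $U_r(N)$, and for $k\ge N-1$ it is holomorphic there. The tail $\sum_{k\ge N-1}$ is handled by the same majorant as in the proof of Theorem \ref{th2}: the factor $(q^{n_1}/[n_1])^{k+1}$ is bounded by $(q^{n_1}/[n_1])^{N}$ times a geometric factor, and the remaining family is summable because $(s_1+N,s_2,\dots,s_r)\in U_r$. Since the $U_r(N)$ cover $\mathbb{C}^r$, the continuation follows.

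\textbf{Main obstacle.} The delicate step is the uniform bound in the star setting. The index sets allow $n_1=n_2=\dots$, so the majorant family after shifting $s_1\mapsto s_1+N$ must be checked to be summable on compacta of $U_r(N)$. I would prove this by expressing $\zeta_q^\star$ as a finite sum of $\zeta_q$'s, which reduces the summability to the already established SZ estimates of Proposition \ref{p1}.
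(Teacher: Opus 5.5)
Your proposal is correct and follows essentially the same route as the paper. You telescope \eqref{eq5.4600} from $n_1=n_2$ and justify exchanging the $k$- and $n$-sums by a star version of Proposition \ref{pro1}. You then continue by the double induction on $r$ and $N$ of Theorem \ref{th2}, with the simplification you correctly note that the left side is a single depth-$(r-1)$ function. One small slip: the telescoped left side is $\big([n_2]/q^{n_2-1}\big)^{-s_1}=q^{-s_1}\big(q^{n_2}/[n_2]\big)^{s_1}$, not $q^{s_1}\big(q^{n_2}/[n_2]\big)^{s_1}$, so you must multiply (not divide) by $q^{s_1}$ to obtain \eqref{eq5.47}.
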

	Further, the matrix representation of the translation formula \eqref{eq5.47} is given by 
	\begin{align*}
		q^{-s_1}V(s_1+s_2, s_3, \dots , s_r)=M(s_1)V(s_1, s_2, \dots, s_r),
	\end{align*}
	where $M(s_1)$ and $ V(s_1, s_2, \dots , s_r)$ are given by \eqref{equn5.16} and  \eqref{equn5.17} respectively.   This can be used to locate the poles of the $q$-multiple zeta-star function.
\end{remark}
\begin{remark}
	Johannes Singer defines a dual of the BZ model \cite{12} on $\mathcal{U}_r$  by
	\begin{align}
		\label{eq64}
		\z_q^D(s_1, \dots , s_r)=\sum_{k_1>\dots >k_r\geq 1}\dfrac{q^{k_1+ \dots +k_r}}{[k_1]^{s_1} \dots [k_r]^{s_r}},
	\end{align}
	where $q>1$.
	For $n \geq 2$ and any complex number $s$, using the difference ~$q^{n-1}{\big([n]-1\big)}^{-s}-q^n{\big(q[n]\big)}^{-s}$, one can derive a translation formula as follows: 
	\begin{align*}
		&\z_q^D(s_1+s_2, s_3, \dots , s_r)- (1-q)\z_q^D(s_1+s_2-1, s_3, \dots , s_r)\\
		& =q^{s_1-1}\sum_{k \geq 0} \frac{s_1(s_1 +1) \dots (s_1 + k )}{(k+1)!}\z_q^D(s_1+k+1, s_2 , \dots , s_r)\nonumber \\  
		& \ \ +( q^{s_1-1}-1)\z_q^D(s_1,s_2, \dots , s_r).
	\end{align*}
	This formula can be used to study the analytic continuation of the function \eqref{eq64}.
	\end{remark}
$\mathbf{Acknowledgement}$: The authors would like to thank Dr. P. Akhilesh, Kerala School of Mathematics, India, for suggesting the work of this article. \\
 



{}

\end{document}